\theoremstyle{plain}
\newtheorem{theorem}{Theorem}[section]
\newtheorem{maintheorem}{Theorem}
\newtheorem{lemma}[theorem]{Lemma}
\newtheorem{proposition}[theorem]{Proposition}
\newtheorem{corollary}[theorem]{Corollary}
\newtheorem{maincorollary}[maintheorem]{Corollary}
\theoremstyle{remark}
\newtheorem{definition}{Definition}
\newtheorem{remark}[theorem]{Remark}
\numberwithin{equation}{section}
\newcommand{\NN}{{\mathbb{N}}}
\newcommand{\ZZ}{{\mathbb{Z}}}
\newcommand{\RR}{{\mathbb{R}}}
\newcommand{\EU}{{\mathbb{S}}}
\newcommand{\In}{{\text{In}}}
\newcommand{\Out}{{\text{Out}}}
\newcommand{\Fix}{{\text{Fix}}}
\newcommand{\loc}{{\text{loc}}}
\newcommand{\dpt}{\displaystyle}
\begin{document}

\title[Dissecting a resonance wedge on heteroclinic bifurcations]{Dissecting a resonance wedge on \\  Heteroclinic bifurcations}
\author[Alexandre Rodrigues]{Alexandre A. P. Rodrigues \\ Centro de Matem\'atica da Univ. do Porto \\ Rua do Campo Alegre, 687,  4169-007 Porto,  Portugal }
\address{Alexandre Rodrigues \\ Centro de Matem\'atica da Univ. do Porto \\ Rua do Campo Alegre, 687 \\ 4169-007 Porto \\ Portugal}
\email{alexandre.rodrigues@fc.up.pt}

\date{\today}

\thanks{AR was partially supported by CMUP (UID/MAT/00144/2019), which is funded by FCT with national (MCTES) and European structural funds through the programs FEDER, under the partnership agreement PT2020. AR also acknowledges financial support from Program INVESTIGADOR FCT (IF/00107/2015).}

\subjclass[2010]{ 34C28; 34C37; 37D05; 37D45; 37G35 \\
\emph{Keywords:} Heteroclinic bifurcations;  Torus-breakdown; Resonance wedge;  Arnold tongue; Strange attractors.}

\begin{abstract}

This article studies  routes to chaos  occurring within a resonance wedge for a 3-parametric family of differential equations acting on a 3-sphere. 
Our starting point is an autonomous vector field whose flow exhibits a weakly attracting  heteroclinic network made by two 1-dimensional connections and a 2-dimensional separatrix between two  equilibria with different Morse indices.  After changing the parameters, while keeping the 1-dimensional connections unaltered, we concentrate our study in the case where the 2-dimensional invariant manifolds of the equilibria do not intersect. 

 We derive the first return map near the network and we reduce the analysis of the system to a 2-dimensional map on the cylinder. 
Complex dynamical features arise from a discrete-time Bogdanov-Takens singularity, which  may be seen as the organizing center by which one can obtain infinitely many  attracting tori, strange attractors, infinitely many sinks and non-trivial contracting wandering domains.  
These dynamical phenomena occur within a structure that we call \emph{resonance wedge}. As an application, we may see the ``classical'' Arnold tongue as a projection of a resonance wedge.  
The results are general, extend to other contexts and lead to a fine-tuning of the theory.

\end{abstract}

\maketitle
\setcounter{tocdepth}{1}

\section{Introduction}\label{intro}

To date there has been very little systematic investigation of the effects of  perturbations that break an invariant torus, despite being natural for the modelling of many biological and physical effects  \cite{AHL2001, Kirk93, KR2008, Langford, Rodrigues2, SNN95}. 
In this paper, we describe the transition from regular dynamics to chaos associated to the \emph{Torus-breakdown Theory} partially described  in Afraimovich and Shilnikov \cite{AS91}, applied to a specific heteroclinic configuration 
involving 2-dimensional connecting manifolds (\emph{continuum of connections} in the terminology of   \cite{AC}).

 In studying bifurcations associated to \emph{Torus-breakdown},  it is natural to examine the bifurcation diagram in terms of \emph{Arnold tongues} \cite{Arnold65}.
  In  planar maps,  a $\frac{p}{q}$--resonance tongue  can be defined as the locus, in the parameter space, where periodic points with rotation number $\frac{p}{q}$ exist,  for $p, q \in \NN$; in the engineering literature, these solutions are also called ${p}:{q}$ phase-locked \cite{Denjoy, Herman}.
       The way in which resonance tongues overlap and evolve indicates how the rotational dynamics is changing.

     Although tongues corresponding to different rotation numbers  can overlap, in general there is no path where a periodic orbit from a resonance tongue becomes a periodic orbit from another  tongue   \cite{Boyland, Denjoy}.  A singular phenomenon has been described by Kirk \cite{Kirk93}, who studied resonance zones for a 3-dimensional system that corresponds to the normal form associated to  a codimension-two Hopf-zero singularity.  Such a normal form is perturbed with nonsymmetric terms 
 breaking the axial symmetry and the phenomenon  of \emph{``merging of resonance wedges''} has been described: far from the torus bifurcation,  a periodic orbit  modifies its shape and collapses with another periodic orbit coming from another tongue. 

\medbreak

Assuming a strong 1-dimensional contracting direction, the structure of the Arnold tongue has been given by Arnold \cite{Arnold65}, Boyland \cite{Boyland} and Herman \cite{Herman} who reduced the study of \emph{Torus-breakdown} effects to the ``canonical family'' on the circle:
$$
x \quad \mapsto \quad x+a+\frac{b}{2\pi}\sin(2\pi x) \qquad \mod{1}, \qquad a, b\in \RR.
$$

 When the radial contracting foliation on the torus is lost, the bifurcation structure of the circle maps family is inadequate to explain the diverse phenomena that  accompany the loss of the attracting torus and it is here that the study of resonance wedges plays an important role.  Resonance tongues are associated to a wide range of behaviours such us: the existence of quasi-periodic solutions, sinks, saddle-node bifurcations, homoclinic orbits, bistability, rotational horseshoes and strange attractors (either ``large'' or ``small'' according to  Broer \emph{et al} \cite{BST98}).   Early papers in this context are \cite{AS91, AHL2001, Anishchenko, Aronson}.   We address the reader to \cite{AH2002, AP93, Boyland, BST98, GreenspanHolmes, Peckman_bananas} for more information on the subject. For a tutorial,  see Shilnikov \emph{et al} \cite{Shilnikov_tutorial}.  
New directions of the  theory and applications to periodiocally-kicked differential equations  can be found in \cite{Bakri, Bakri2,  CFM2020,  Rodrigues2019, WY}.

The goal of this paper is to construct a 3-parameter bifurcation diagram for a concrete configuration associated to a weakly attracting heteroclinic network  with two saddle-foci previously studied in \cite{LR2016, Rodrigues2019},  the \emph{Bykov attractor}.  
Our  study has been motivated by numerical results obtained in   \cite{Algaba2001, CFM2020, CastroR2019}.

Our purpose in writing this paper is not only to point out the range of phenomena that can occur when an invariant torus is broken, but to bring to the foreground the techniques that have allowed us to reach these conclusions in a relatively straightforward manner. 
These mechanisms are not limited to the heteroclinic network considered here.

\subsection{The novelty.}
While some progress has been made, both numerically and analytically, the number of explicit configurations whose flows have an invariant torus and for which the \emph{Torus-Breakdown} description is available, has remained small.

By studying unfoldings of a Bykov attractor,  we are able to delineate the ways in which the first return map to a cross section  can make the transition from a single rotation number to an interval of rotation numbers\footnote{The existence of \emph{rotational horseshoes}, described in Appendix \ref{Rotational horseshoe},  is responsible for the existence of an interval of rotation number -- see \cite{PPS}. }.  As a continuation of the project started in \cite{Rodrigues2019}, we will prove analytically that a sequence of discrete-time Bogdanov-Takens bifurcations organise the dynamics that appear in the unfolding.   
Besides, within the Arnold wedge, we numerically find surfaces corresponding to the following bifurcations: Hopf, period-doubling and the transition \emph{node $\mapsto$ focus}.    Our results agree well with the literature about heteroclinic bifurcations, Arnold tongues and the  scenarios described in  \cite{AP93, Aronson, Peckman_bananas}.
 
Our object of study  is not ``\emph{just another dynamical system}'', but   representative for the case of 3-dimensional dissipative flows admitting 2-dimensional connections that are pulled apart.  In similar models, the corresponding phenomenology should contain no further secrets. 

\subsection{Physical setting.}
Our study allows us to understand the bifurcations from an invariant torus to strange attractors  that appear in Ruelle and Takens  \cite{RT71} and Langford \cite{Langford} (see also \S 6.2 of  \cite{CFM2020}).   In the context of turbulent flows, the author of \cite{Langford}  studied a two-parameter unfolding a Hopf-zero singularity and proved that axisymmetric perturbations generate an invariant torus.
By slightly breaking the symmetry,  Langford prove  that the flow becomes more and more turbulent with fractal basins of attraction as a consequence of the emergence strange attractors.
 The bifurcations described in our paper have similarities with those described in   \cite{SNN95}  in the context of a low-order atmospheric circulation model.

\subsection{The structure.}

 In Section \ref{s:setting}, we describe precisely our object of study and we review the literature related to it. In Section \ref{main results} we state the main results of the article. The coordinates and other notation used in the rest of the article are presented in Section \ref{localdyn} to prove the main results of the manuscript in  Section \ref{proof_th_A}.

We  refine the structure of Arnold tongues which appear in the context of heteroclinic bifurcations on Section \ref{dissection}.
 In Section \ref{s:example}, we briefly illustrate our theoretical results with an example explored in \cite{CastroR2019}. Symmetry-breaking effects will be described.

We finish the article with a discussion in Section \ref{discussion} about  the consequences of our findings. Dynamics similar to what we described is expected to occur   near periodically forced  attracting heteroclinic cycles.  For reader's convenience, we have compiled at  the end of the article (Appendix \ref{Definitions}) a list of basic  definitions. 

\section{Setting and state of art}
\label{s:setting}
In this section, we describe the main hypotheses about the weakly heteroclinic network we are considering. We postpone the technical definitions  used in this section to Appendix \ref{Definitions}.

\subsection{Starting point}
\label{starting point}
For $\varepsilon>0$ small, consider the 3-parameter family of $C^3$--smooth differential equations
\begin{equation}
\label{general2.1}
\dot{x}=f_{(A, \lambda, \omega)}(x)\qquad x\in \EU^3\subset \RR^4 \qquad A, \lambda \in [0, \varepsilon],\qquad  \omega \in \RR^+
\end{equation}
 where $\EU^3$ represents the unit three-sphere, endowed with the usual topology. Let us denote by $\varphi_{(A, \lambda, \omega)}(t,x)$, $t \in \RR$, the   flow associated to \eqref{general2.1} 
 satisfying the following properties for $A=\lambda=0$ and $\omega \in \RR^+$:

\bigbreak
\begin{enumerate}
 \item[\textbf{(P1)}] \label{B1}  There are two different equilibria, say $O_1$ and $O_2$.
 \bigbreak
 \item[\textbf{(P2)}] \label{B2} The eigenvalues of $Df_X$ are:
 \medbreak
 \begin{enumerate}
 \item[\textbf{(P2a)}] $E_1$ and $ -C_1\pm \omega i $ where $C_1,E_1>0$, for  $X=O_1$;
 \medbreak
 \item[\textbf{(P2b)}] $-C_2$ and $ E_2\pm \omega i $ where $C_2, E_2>0$,   for $X=O_2$.
 \end{enumerate}
 \end{enumerate}
\bigbreak 
  For $W\subseteq \EU^3$, denoting by $\overline{W}$ the  closure of $W$, we also assume that:
 \begin{enumerate}
 \bigbreak
  \item[\textbf{(P3)}]\label{B3} The manifolds $\overline{W^u(O_2)}$ and $\overline{ W^s(O_1)}$ coincide and  the set $\overline{W^u(O_2)\cap W^s(O_1)}$ consists of a 2-dimensional sphere ($ {W^u(O_2)\cap W^s(O_1)}$ is called the $2D$-connection).
  \end{enumerate}\bigbreak
  and 
   \bigbreak
   \begin{enumerate}
\item[\textbf{(P4)}]\label{B4} There are two trajectories, say  $\gamma_1, \gamma_2$, contained in  $W^u(O_1)\cap W^s(O_2)$, each one in each connected component of $\EU^3\backslash \overline{W^u(O_2)}$ ($\gamma_1, \gamma_2$ are called the $1D$-connections).
\end{enumerate}
 
 \begin{figure}[h]
\begin{center}
\includegraphics[height=4cm]{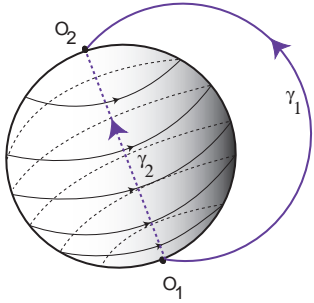}
\end{center}
\caption{\small  Scheme of the attractor $\Gamma$ satisfying \textbf{(P1)--(P4)}, for $A=\lambda=0$ and $\omega>0$. }
\label{Bykov1}
\end{figure}

\bigbreak
The equilibria $O_1$ and $O_2$, the $2D$-connection referred in \textbf{(P3)} and the two trajectories $\gamma_1, \gamma_2$ of  \textbf{(P4)}  build a heteroclinic network, that we denote by $\Gamma$. This network, illustrated in Figure \ref{Bykov1},  has two cycles.   Assuming that the set of eigenvalues of $Df_{O_1}$ and $Df_{O_2}$  satisfy 
   \medbreak 
\begin{enumerate}
 \item[\textbf{(P5)}] \label{P5}  $\dpt\frac{C_1 C_2}{ E_1 E_2} \gtrsim 1$
  \end{enumerate}
\medbreak 
 \noindent the network $\Gamma$ is asymptotically stable (proof in \cite{LR2016}): with exception of the origin, all trajectories are forward attracted to $\Gamma$.   As a consequence, we may find a neighborhood $\mathcal{U}$ of   $\Gamma$ having its boundary transverse to the flow  and such that every solution starting in $\mathcal{U}$ is asymptotic to $\Gamma$. The set $\Gamma$ is usually called \emph{Bykov attractor}\footnote{The terminology \emph{Bykov} is a homage to V. Bykov who dedicated his latest research projects to similar cycles \cite{Bykov99, Bykov00}.}.
\subsection{Chirality: a topological assumption}
There are two  possibilities for the geometry of the flow around the saddle-foci of $\Gamma$,
depending on the direction the trajectories turn around the  $1D$-connections.
This is related to the topological concept of \emph{chirality} introduced in \cite{LR2015}. 
\medbreak
Let $V_1$
and $V_2$ be small disjoint neighborhoods of $O_1$ and $O_2$ with boundaries $\partial V_1$ and $\partial V_2$, respectively. These neighborhoods will be precisely constructed   in Section~\ref{localdyn}.
Trajectories starting at $\partial V_1\backslash W^s(O_1)$ near $W^s(O_1)$ go into the interior of $V_1$ in positive time, then follow one of the solutions in $[O_1 \rightarrow O_2]$, go inside $V_2$,  come out at $\partial V_2$ and then return to $\partial V_1$ (see Figure~\ref{Chirality}). This trajectory is not closed since $\Gamma$ is attracting. 

Let $\mathcal{Q}$ be a piece of trajectory like this from $\partial V_1$ to $\partial V_1$. Within $\partial V_1\backslash W^s(O_1)$, join its starting point to its end point by a segment as in Figure~\ref{Chirality}, forming a closed curve, which we call the  \emph{loop} of $\mathcal{Q}$. 
By construction, the loop of $\mathcal{Q}$ and  $\Gamma$ are disjoint closed sets. 

\begin{definition}\cite{LR2015}
We say that  $O_1$ and $O_2$ in $\Gamma$ have the  \emph{same chirality} if the loop of every trajectory starting near $O_1$ is linked to $\Gamma$ (\emph{i.e} the trajectories cannot be disconnected by an isotopy). Otherwise, we say that $O_1$ and $O_2$ have \emph{different chirality}.
\end{definition}
 The next hypothesis may be written as:

\medbreak

\begin{enumerate}
\item[\textbf{(P6)}] \label{B6} The saddle-foci $O_1$ and $O_2$ have the same chirality.
\end{enumerate}

\begin{figure}[h]
\begin{center}
\includegraphics[height=3cm]{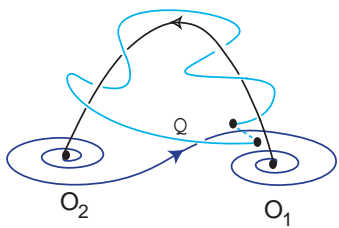}
\end{center}
\caption{\small  Illustration of Property  \textbf{(P6)}: the saddle-foci $O_1$ and $O_2$ have the same chirality. }
\label{Chirality}
\end{figure}

For  $r \geq 3$, denote by  $\mathfrak{X}^r(\EU^3)$, the set of  3-parameter family of $C^3$--vector fields on $\EU^3$ satisfying Properties \textbf{(P1)--(P6)}, endowed with the $C^r$--topology.\bigbreak
\subsection{Perturbing terms}
Concerning the effect of  $A$, $\lambda$ and $\omega$ on the dynamics of \eqref{general2.1}, we assume that:
 \medbreak
\begin{enumerate}
\item[\textbf{(P7)}] \label{B6} For $A> \lambda \geq 0$ and $\omega \in \RR^+$, the two trajectories within $W^u(O_1)\cap W^s(O_2)$ persist.
\end{enumerate}
 \medbreak
By the Kupka-Smale Theorem, generically the invariant 2-dimensional manifolds $W^u (O_2)$ and $W^s (O_1)$ are  transverse (either intersecting or not): 
 \medbreak
\begin{enumerate}
\item[\textbf{(P8a)}]\label{B8} For $A, \lambda \geq 0$ and $\omega \in \RR^+$, the  manifolds $W^u(O_2)$ and $W^s(O_1)$ intersect transversely. 
\medbreak
\item[\textbf{(P8b)}]\label{B8b} For $A, \lambda \geq 0$ and $\omega \in \RR^+$, the  manifolds $W^u(O_2)$ and $W^s(O_1)$ do not intersect.
\end{enumerate}

 \medbreak

 and
\medbreak

\begin{enumerate}
\item[\textbf{(P9)}] \label{B9} Up to high order terms in $x,y$, the transitions along the connections $[O_1 \rightarrow  O_2]$ and $[O_2 \rightarrow  O_1]$ are given, in the local coordinates that will be defined in Section~\ref{localdyn}, by the \emph{Identity map} and by  $$(x,y)\mapsto (x,y+A + \lambda \Phi(x))$$ respectively, where $\Phi:\EU^1 \rightarrow \EU^1$ is a Morse function with at least two non-degenerate critical points ($\EU^1=\RR \pmod{2\pi}$). This assumption will be clearer later.
\end{enumerate}
\medbreak

\begin{figure}[h]
\begin{center}
\includegraphics[height=4.1cm]{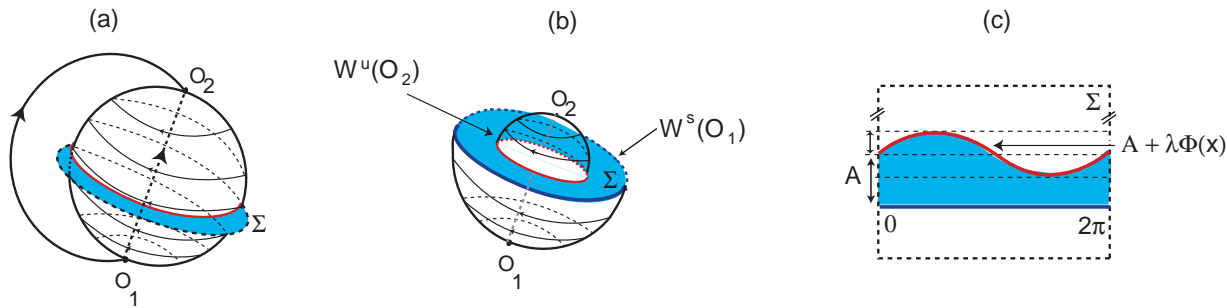}
\end{center}
\caption{\small  Illustration of Properties \textbf{(P8b)} and \textbf{(P9)}, where $\Sigma$ represents a global cross section to $\Gamma$. Double bars mean that the sides are identified.}
\label{scheme1} 
\end{figure}

\subsection{Constants}
\label{constants1}
Once for all, we define the following notation that will be used throughout the present manuscript:

\begin{eqnarray}
\label{constants}
\delta_1 = \frac{C_1}{E_1 }>0 \qquad & \dpt  \delta_2 = \frac{C_2}{E_2 }>0& \qquad \delta=\delta_1\, \delta_2\gtrsim 1\nonumber \\ \nonumber \\  \nonumber \\ 
\omega^\star_\ell = \frac{2\ell \pi (\delta-1)}{\ln \delta}>0 \qquad  & \dpt K = \frac{E_2 +C_1  }{E_1E_2}>0  & \qquad \dpt  
 \qquad \qquad \\ \nonumber \\  \nonumber \\ 
\dpt M= \dpt \delta ^{\frac{1}{1-\delta}}- \delta ^{\frac{\delta}{1-\delta}}  \qquad &\dpt  \mu = (A, \lambda, \omega)&\qquad  \nu = (A, \lambda_0, \omega), \, \, \,\lambda_0>0 \text{  fixed} \nonumber  \nonumber \\ \nonumber
\end{eqnarray}

\subsection{Digestive remarks about the hypotheses}
We discuss the Hypotheses  \textbf{(P1)--(P9)}, stressing  that they are natural in several settings. An illustrative scheme of the  Hypotheses  has been summarized in Table 1  of \cite{Rodrigues2019}.
\begin{remark}
\label{simetria_remark}
Although the fully description of the bifurcations associated to the heteroclinic attractor $\Gamma$ is a phenomenon of codimension three \cite{KLW, LR},  the setting described by \textbf{(P1)--(P9)}  is natural in symmetric contexts \cite{Aguiar_tese, LR2016} and also in some unfoldings of the Hopf-zero singularity \cite{BIS, Gaspard, SNN95}.  
\end{remark}

\begin{remark}
Hypothesis \textbf{(P7)} corresponds to the \emph{partial symmetry-breaking} considered in Section 2.4 of \cite{LR}.
The setting described by \textbf{(P1)--(P8b)} and \textbf{(P9)} generalizes Case (4) of~\cite{Rodrigues2019}. 
\end{remark}

\begin{remark}
The first hit of $W^u(O_2)$ and $W^s(O_1)$ to a global cross section $\Sigma$ are two closed curves (see Figure \ref{scheme1}); the distance between these two curves  can be written as $$A + \lambda \Phi(x),\qquad  x\in \EU^1,$$  which may be seen as an  approximation of the \emph{Melnikov integral} associated to the intersection of $W^u(O_2)$ and $W^s(O_1)$  \cite[\S 4.5]{GH}.  
 \end{remark} 

\begin{remark}
Variable $\omega$ represents the speed of rotation of the saddle-foci.  Using different imaginary parts on the complex eigenvalues of $Df_{O_1}$ and $Df_{O_2}$   would complicate the expression of $K \omega$, without any qualitative benefits in the final result. 
\end{remark}

\begin{remark}
Derivations using a more general form for the transition $[O_1 \rightarrow O_2]$ have been performed in  Section 6 of \cite{Rodrigues3}.
The transition along $[O_2 \rightarrow O_1]$  corresponds to the expected unfolding from the coincidence of the 2-dimensional invariant manifolds at $f_{(0,0,\omega)}$, $\omega\in \RR^+$.

\end{remark}

\begin{remark}
\label{P7-P8}
The technical Hypothesis \textbf{(P9)} modulates the two generic possibilities given by \textbf{(P8a)} and \textbf{(P8b)}:

 \begin{eqnarray*}
\lambda>A \geq 0, \,\, \,  \omega \in \RR^+ \quad &\Leftrightarrow& \quad \text{\textbf{(P8a)}: $W^u(O_2)$ and $W^s(O_1)$ intersect transversely;} \\ \\
A>\lambda \geq 0,  \,\, \,  \omega \in \RR^+ \quad &\Leftrightarrow & \quad \text{\textbf{(P8b)}: $W^u(O_2)$ and $W^s(O_1)$ do not intersect.}\\
\end{eqnarray*}
Assumption \textbf{(P9)}  governs the transition maps along the heteroclinic connections and is necessary to make precise computations. All results are valid for any $2\pi$-periodic non-constant Morse function defined on $[-1,1]$.    \\
\end{remark}

\subsection{State of art}
In this section, we give an overview of results for the class of vector fields satisfying either \textbf{(P1)--(P8a)--(P9)} or \textbf{(P1)--(P8b)--(P9)}.

\subsubsection{Heteroclinic tangle:} 
If $f_{(A, \lambda, \omega)} \in\mathfrak{X}^3(\EU^3)$ and satisfies \textbf{(P7)--(P8a)--(P9)}, then the 2-dimen\-sional invariant manifolds $W^u(O_2)$ and $W^s(O_1)$ meet transversely, giving rise to a union of Bykov cycles \cite{Bykov00, LR2016}. The dynamics in the maximal invariant set inside $\mathcal{U}$, contains the suspension of horseshoes accumulating on the   network described in \cite{ACL05, KLW, LR, Rodrigues3}. Near the original heteroclinic attractor $\Gamma$, the flow contains infinitely many homoclinic tangencies and sinks with long  periods, coexisting with sets with positive entropy, giving rise the so called \emph{quasi-stochastic attractors} \cite{Gonchenko97}; more details in Appendices \ref{quasi1} and \ref{app: HSt}.

\subsubsection{Torus-breakdown}
\label{ss: transitional}
 If $f_{(A, \lambda, \omega)} \in\mathfrak{X}^3(\EU^3)$ and satisfies \textbf{(P7)--(P8b)--(P9)}, then the 2-dimen\-sional invariant manifolds  of the saddle-foci do not intersect. According to \cite{Rodrigues2019}, in the bifurcation diagram $\left( \omega,  {\lambda} / {A}\right)$ of Figure \ref{graph1}, there are two curves, the graphs of $h_1$ and $h_2$,  such that,  for all $\omega\in \RR^+$, we have  $h_1(\omega)<h_2(\omega)$ and:
\begin{enumerate}
\medbreak
\item \textbf{Attracting torus:} the region below the graph of $h_1$  corresponds to parameters whose flow has an attracting normally hyperbolic 2-torus with zero topological entropy -- Theorem B of \cite{Rodrigues2019}. In the bifurcation parameter $(\omega, \lambda/A)$, there exists a set of \emph{positive Lebesgue measure}  for which the whole torus is the minimal attractor.  
\begin{figure}[h]
\begin{center}
\includegraphics[height=5.5cm]{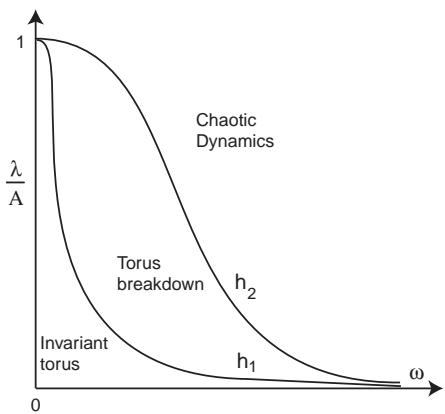}
\end{center}
\caption{\small Summary of the results of \cite{Rodrigues2019}: location of the attracting 2-torus and chaotic regions with respect to   $\omega$ and $\frac{\lambda}{A}$, for a vector field  $f_{(A, \lambda, \omega)} \in\mathfrak{X}^3_{\text{Byk}}(\EU^3)$.  }
\label{graph1}
\end{figure}
\medbreak
\item\textbf{Transitional dynamics:}  for a fixed $\omega>0$, in the transition from $h_1(\omega)$ to $h_2(\omega)$,  the attracting torus breaks. It  starts to disintegrate into a finite collection of periodic saddles and sinks, a phenomenon occurring within an \emph{Arnold tongue} \cite{Rodrigues2019,  Shilnikov_tutorial, WY}.  Each time the Floquet multipliers of periodic orbits cross a root of unity, a pair of saddle-node bifurcation curves may be defined. These curves limit locally the corresponding resonance tongue. In addition, there are regions corresponding to homoclinic tangencies to dissipative periodic solutions, responsible for the \emph{persistence of H\'enon-like strange attractors} \cite{Rodrigues2019}. 
\medbreak.
\item \textbf{Chaotic dynamics:} the region above the graph of $h_2$ corresponds to vector fields whose flows exhibit \emph{rotational horseshoes}  \cite{Rodrigues2019} (see Appendix \ref{Rotational horseshoe}). Theorem D of \cite{Rodrigues2019} may be seen as a criterion to obtain rotational horseshoes near $\Gamma$; once they develop, they persist for small perturbations.

\medbreak

\end{enumerate}
\bigbreak
 
As in \cite{AHL2001}, the graphs of $h_1$ and $h_2$ are not bifurcation lines; they   define a region inside which  the transitional dynamics occurs. 
From now on, without loss of generality,  let us also assume that  $\Phi(x)=\sin x$,  $x\in \EU^1$, which has exactly two critical points. 
It simplifies the computations and allows comparison with previous works. 
  For $r \geq 3$, we denote by $\mathfrak{X}^r_{\text{Byk}}(\EU^3)\subset \mathfrak{X}^r(\EU^3)$, the set of $C^r$--vector fields on $\EU^3$ satisfying conditions  \textbf{(P1)}--\textbf{(P8b)} and \textbf{(P9)}.

\section{Main results}\label{main results}

Let $\mathcal{T}$ be a neighborhood of the heteroclinic attractor $\Gamma$, which exists for $A=\lambda=0$ and $\omega \in \RR^+$.  For $\varepsilon>0$ small, 
define the set
\begin{equation}
\label{paramater_set}
\mathcal{V}= \{\mu=(A, \lambda, \omega):  \qquad 0\leq \lambda< A \leq \varepsilon \qquad M\geq  A+\lambda \qquad  \text{and} \qquad \omega \in \RR^+ \}
\end{equation}\bigbreak
\noindent
and let $\left(f_{\mu}\right)_{\mu \in \mathcal{V}}$ be a 3-parameter family of vector fields in $\mathfrak{X}_{\text{Byk}}^3(\EU^3)$. 
According to  \cite{Rodrigues2019}, there is $\tilde\varepsilon>0$ such that the first return map to a given global cross section $\Sigma$ to $\Gamma$ can be expressed, in local coordinates $(x,y)\in \Sigma$,   by:
\medbreak
\begin{equation}
\label{first_return_map}
\mathcal{F}_{\mu}(x, y)=\left[ x - K \, \omega  \ln ( y+A+\lambda\sin x) \pmod{2\pi}  , \, \, ( y+A+\lambda\sin x)^\delta\right] + \ldots
\end{equation}
where $$(x,y)\in \mathcal{D}=\{x\in \RR \pmod{2\pi}, \quad y/\tilde \varepsilon  \in [-1, 1] \quad \text{and} \quad y + A + \lambda \sin x> 0\}$$ and $\dots$ stand for   small  terms depending on $x$ and $y$  converging to zero along their derivatives. The main steps to get the expression \eqref{first_return_map} are revived in Section \ref{localdyn}.
 Since $\delta \gtrsim 1$, for $A>0$ sufficiently small, the second component of $\mathcal{F}_{\mu}$ is contracting in $y$ (Remark \ref{rem_dissipative}).
 \bigbreak
 \begin{definition}
Let $(x_0, y_0)\in \mathcal{D}$ and $\ell \in \NN$. We say that $(x_0, y_0)$ is a $(1,\ell)$--fixed point of $\mathcal{F}_{\mu}$ if $\mathcal{F}_{\mu}(x_0, y_0)= (x_0+2\ell \pi, y_0)$. \end{definition}
  \medbreak 

  The main contribution of this article is the following result:
  \medbreak 

\begin{maintheorem}\label{thm:0}
Let $f_{\mu} \in\mathfrak{X}_\emph{Byk}^3(\EU^3)$ and $\ell \in \NN$.
There are two curves in the 3-dimensional parameter space $\mu=(A, \lambda, \omega)\in \mathcal{V}$ where $(1,\ell)$--fixed points of $\mathcal{F}_{\mu}$ undergo a discrete-time Bogdanov-Takens bifurcation. The points in the curves occur at values of $\omega= \omega^\star_\ell$.
\end{maintheorem}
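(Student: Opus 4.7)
The plan is to parametrise the $(1,\ell)$-fixed points of $\mathcal{F}_\mu$ explicitly and then impose, on top of that parametrisation, the algebraic conditions characterising a discrete Bogdanov-Takens point: $\det D\mathcal{F}_\mu = 1$, $\operatorname{tr} D\mathcal{F}_\mu = 2$, and non-triviality of the associated Jordan block. Writing $w = y + A + \lambda\sin x$, the leading part of \eqref{first_return_map} becomes $(x,y)\mapsto(x - K\omega\ln w,\,w^\delta)$, so a $(1,\ell)$-fixed point $(x_0,y_0)$ is determined by
\[
w_0 = e^{-2\ell\pi/(K\omega)}, \qquad y_0 = w_0^\delta, \qquad \lambda\sin x_0 = w_0 - w_0^\delta - A,
\]
which reduces the fixed-point problem to a single scalar equation in $x_0$.

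A direct differentiation yields
\[
J := D\mathcal{F}_\mu(x_0,y_0) = \begin{pmatrix} 1 - K\omega\lambda\cos x_0/w_0 & -K\omega/w_0 \\ \delta\lambda w_0^{\delta-1}\cos x_0 & \delta w_0^{\delta-1}\end{pmatrix},
\]
and the off-diagonal contributions to the determinant cancel exactly, giving $\det J = \delta w_0^{\delta-1}$. Imposing $\det J = 1$ forces $w_0 = \delta^{1/(1-\delta)}$, and combining this with $w_0 = e^{-2\ell\pi/(K\omega)}$ pins $\omega$ to a single value, namely $\omega^\star_\ell$. Once $\det J = 1$ holds, the condition $\operatorname{tr} J = 2$ collapses to $\cos x_0 = 0$, so $\sin x_0 = \pm 1$. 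Substituting back into the compatibility relation and using $w_0 - w_0^\delta = M$ (see Section \ref{constants1}) produces the two candidate curves
\[
\mathcal{C}^{\pm}_\ell : \quad \omega = \omega^\star_\ell, \qquad A \pm \lambda = M.
\]

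To confirm that we see a genuine Bogdanov-Takens point rather than a semisimple double eigenvalue, I would substitute $\cos x_0 = 0$ and $\delta w_0^{\delta-1} = 1$ into $J - I$; the result is the rank-one nilpotent matrix whose image is spanned by $(1,0)$, so the algebraic multiplicity $2$ of the eigenvalue $+1$ is paired with geometric multiplicity $1$, certifying the non-trivial Jordan block. Transversality of the three-parameter unfolding with respect to the BT normal form then reduces to checking that the map $(A,\lambda,\omega)\mapsto(\operatorname{tr} J - 2,\,\det J - 1)$ has full rank along $\mathcal{C}^{\pm}_\ell$; the component $\partial_\omega \det J$ is non-zero through the $\omega$-dependence of $w_0$, and implicit differentiation of the compatibility relation for $\sin x_0$ delivers a non-zero transverse contribution to $\partial_A \operatorname{tr} J$.

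The main obstacle I anticipate is the verification of the cubic-level non-degeneracy of the discrete Bogdanov-Takens normal form, i.e., the non-vanishing of the coefficients (in Kuznetsov's notation, $a_{20}$ and $b_{11}$) obtained after reducing $\mathcal{F}_\mu^{\,2}$ to its centre manifold and normalising. These coefficients are polynomial in $K$, $\delta$ and $\omega^\star_\ell$, and their non-vanishing is forced by $\delta \gtrsim 1$ and $\omega^\star_\ell > 0$, but the algebra is long and is best relegated to an appendix. Finally, the flat terms hidden in the ellipsis of \eqref{first_return_map} shift the BT locus by quantities controlled by an implicit-function argument, so both branches of $\mathcal{C}^{\pm}_\ell$ persist for the full map.
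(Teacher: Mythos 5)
Your linear analysis is correct and lands exactly where the paper does, by a slightly more direct route: you impose $\det J=1$ and $\operatorname{tr}J=2$ on the explicit Jacobian and read off $w_0=\delta^{1/(1-\delta)}$, $\cos x_0=0$ and $A\pm\lambda=M$, whereas the paper first introduces $G_\ell(\omega)=e^{-2\ell\pi/K\omega}-e^{-2\ell\delta\pi/K\omega}$, locates its maximum $M$ at $\omega^\star_\ell$ (Lemma \ref{lemaF}), and only then observes (Proposition \ref{propBT}) that at the tangency $G_\ell(\omega^\star_\ell)=A\pm\lambda$ the Jacobian is upper triangular with $\Delta_2=\delta(y+A\pm\lambda)^{\delta-1}=1$. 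The two computations are equivalent; your cancellation $\det J=\delta w_0^{\delta-1}$ and the nilpotency of $J-I$ reproduce the paper's ``double eigenvalue $1$, not the identity'' verbatim. The geometric picture you lose by skipping $G_\ell$ is the saddle-node structure of the wedge boundary (the unfolding of the tangency of $\varphi(x)=A+\lambda\sin x$ with the horizontal line $G_\ell(\omega)$), which the paper uses later, but that is not needed for the statement of Theorem \ref{thm:0} itself.

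The genuine gap is the nonlinear non-degeneracy, which you defer and describe inaccurately. First, this is a $1{:}1$ resonance (double eigenvalue $+1$), so one normalises $\mathcal{F}_\mu$ itself, not $\mathcal{F}_\mu^{\,2}$, and no centre-manifold reduction is involved: the centre subspace is all of $\RR^2$. Second, your claim that the non-vanishing of the normal-form coefficients ``is forced by $\delta\gtrsim 1$ and $\omega^\star_\ell>0$'' is not right: the paper's Table 1 shows that the relevant coefficients are $b_{20}(\nu)=\pm(A\pm\lambda)^{\delta-2}\lambda\delta(1-\delta)$ and $a_{20}(\nu)+b_{11}(\nu)-b_{20}(\nu)=\mp C K\omega\lambda/(A\pm\lambda)^2$, both proportional to $\lambda$. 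They vanish identically on $\lambda=0$, which is precisely why the paper fixes $\lambda=\lambda_0>0$, works in the $(A,\omega)$-slice, and then lets $\lambda_0$ vary to sweep out the two curves in $\mathcal{V}$; the sign conditions $b_{20}<0$ and $a_{20}+b_{11}-b_{20}>0$ (which also give the stable invariant circle via \eqref{stable circle}) are then checked against Proposition 3.1 of Yagasaki \cite{Yagasaki}. Without carrying out this computation — or at least identifying the correct genericity conditions and their dependence on $\lambda$ — your argument establishes only the linear degeneracy, not the Bogdanov--Takens bifurcation asserted in the theorem. Your transversality check on $(A,\lambda,\omega)\mapsto(\operatorname{tr}J-2,\det J-1)$ is reasonable and corresponds to the condition $\det D_\mu\nu(0)\neq 0$ that the paper records in Remark \ref{tangency1}.
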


The proof of Theorem \ref{thm:0} is performed in Subsection \ref{prova_pd}, after the statement of some preliminary technical results.
Around the Bogdanov-Takens bifurcation, we found the following (secondary) codimension one bifurcations:

\begin{maincorollary}
\label{Corol1}
Let $f_{\mu} \in\mathfrak{X}_\emph{Byk}^3(\EU^3)$.
There are surfaces in the 3-dimensional parameter space $\mu=(A, \lambda, \omega)\in \mathcal{V}$ where $(1,\ell)$--fixed points of $\mathcal{F}_{\mu}$ undergo:\medbreak
\begin{enumerate}
\renewcommand{\theenumi}{(\alph{enumi})}
\renewcommand{\labelenumi}{{\theenumi}}
\item\label{saddlenodes}  a saddle-node bifurcation; \\
\item\label{Hopf} a Hopf bifurcation;\\
\item\label{homoclinic} generic (quadratic) homoclinic tangencies associated to a dissipative saddle point\footnote{A  $\mathcal{F}_{\mu}$-fixed point $O$ is \emph{dissipative} if $O$ is hyperbolic and  $|\det \mathcal{F}_{\mu}(O)| <1$.}. \medbreak
\end{enumerate}
In the region between the surfaces corresponding to homoclinic tangencies, there is a transverse intersection of the stable and the unstable manifolds of an invariant saddle. The surfaces corresponding to Hopf and saddle-node bifurcations meet tangentially along a curve.
 \end{maincorollary}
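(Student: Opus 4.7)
The plan is to reduce the first return map $\mathcal F_\mu$ near a Bogdanov--Takens point given by Theorem A to the standard discrete-time BT normal form, and then transport the classical unfolding into the 3-parameter space $\mathcal V$. Fix $\ell \in \NN$, a point $\mu_* = (A_*, \lambda_*, \omega^\star_\ell)$ lying on one of the BT curves of Theorem A, and the corresponding $(1,\ell)$-fixed point $p_*$ of $\mathcal F_{\mu_*}$. All local analysis below is carried out near $(\mu_*, p_*)$.

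The first step is the normal form reduction. Using the classical algorithm for the discrete Bogdanov--Takens singularity (as in Kuznetsov, Broer--Roussarie--Simo or Yagasaki), I would exhibit a smooth change of coordinates that brings $\mathcal F_\mu$, restricted to a two-parameter slice transverse to the BT curve, into the model
\begin{equation*}
N_{(\beta_1,\beta_2)}(u,v) = \bigl(u + v,\; v + \beta_1 + \beta_2 v + a u^2 + b u v\bigr) + \ldots,
\end{equation*}
and check that the nondegeneracy coefficients $a, b$ are nonzero. These are explicit polynomials in the derivatives of $\mathcal F_\mu$ at $p_*$, computable from \eqref{first_return_map}. Since $\dim \mathcal V = 3$ and the BT set of Theorem A is a curve, I would then verify that the parameter map $\mu \mapsto (\beta_1(\mu), \beta_2(\mu))$ is a submersion by computing the rank of the resulting $2\times 3$ Jacobian along the BT curve.

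Granting the submersion, the conclusions now follow by transporting the classical bifurcation diagram of the discrete BT normal form. From $\beta_1 = \beta_2 = 0$ there emanate a fold (saddle-node) curve, a Neimark--Sacker curve tangent to the fold at the BT point, and a pair of curves of generic quadratic homoclinic tangencies of the invariant manifolds of the saddle produced by the fold; between the two tangency curves lies an open wedge on which these manifolds intersect transversely. Pulling back through the submersion yields the three families of codimension-one surfaces \ref{saddlenodes}, \ref{Hopf} and \ref{homoclinic} in $\mathcal V$, the tangential meeting of the Hopf and saddle-node surfaces along a curve being exactly the image of the tangency of the corresponding curves in the normal form, and the region of transverse intersection between the stable and unstable manifolds of the saddle being the image of the normal form wedge.

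It remains to check dissipativity of the saddle in \ref{homoclinic}. A direct computation from \eqref{first_return_map}, in which the mixed terms involving $\lambda \cos x$ cancel, gives
\begin{equation*}
\det D\mathcal F_\mu(x,y) = \delta \, (y + A + \lambda \sin x)^{\delta - 1} + \ldots .
\end{equation*}
Since $\delta \gtrsim 1$ and $y + A + \lambda \sin x$ is small on the domain $\mathcal D$, the condition $A + \lambda \leq M$ built into the definition \eqref{paramater_set} of $\mathcal V$ is precisely what forces $\delta u^{\delta - 1} < 1$ at the fixed point, so the saddle is dissipative. The main obstacle is the submersion check in the first step: verifying transversality of $\mu \mapsto (\beta_1, \beta_2)$ at each BT point requires tracking the higher-order expansion of $\mathcal F_\mu$ in the coordinates of Section \ref{localdyn} and its dependence on $(A, \lambda, \omega)$ to sufficient order; once this is in hand, the remaining statements of Corollary \ref{Corol1} follow from the universality of the discrete BT unfolding.
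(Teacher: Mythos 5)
Your route is essentially the paper's: both reduce $\mathcal{F}_\mu$ at the $(1,\ell)$-fixed point to the discrete Bogdanov--Takens normal form, verify the nondegeneracy coefficients (the paper computes $b_{20}(\nu)\neq 0$ and $a_{20}(\nu)+b_{11}(\nu)-b_{20}(\nu)\neq 0$ from Table 1 and invokes Proposition 3.1 of Yagasaki), import the classical unfolding (two fold curves, a Neimark--Sacker curve tangent to them, two exponentially close homoclinic-tangency curves bounding a wedge of transverse intersection), and then sweep the resulting curves over $\lambda$ to obtain surfaces in $\mathcal{V}$. The submersion check you flag as the ``main obstacle'' is exactly the condition $\det D_\mu\nu(0)\neq 0$ of Yagasaki's proposition, which the paper records in Remark \ref{tangency1} and uses to get the tangency of the Hopf and saddle-node surfaces; so your identification of where the work lies is accurate, and your attribution of the tangency statement to the normal-form geometry matches the paper's.

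One step in your write-up is wrong as stated: the dissipativity argument. By Proposition \ref{propBT}, at the Bogdanov--Takens fixed point one has $\det D\mathcal{F}_\mu = \delta\,(y+A\pm\lambda)^{\delta-1} = 1$ \emph{exactly} (that is precisely how the double eigenvalue $1$ was obtained), so no inequality of the form $A+\lambda\leq M$ can force $\det<1$ there; the constraint $M\geq A+\lambda$ in \eqref{paramater_set} only guarantees that the fixed points and the BT points exist, not that the determinant is subcritical. Near the BT point the determinant crosses $1$ transversally (this crossing is the Hopf surface $\mathbf{det}=1$ of Section \ref{dissection}), and the saddle involved in the homoclinic tangencies is dissipative only on the appropriate side of that surface --- a fact that in the paper is part of the output of Yagasaki's unfolding (item (5) in Section \ref{prova_pd}), controlled by the sign of $b_{20}$, rather than a consequence of the parameter restriction. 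You should either cite that part of the normal-form result or restrict the tangency surfaces to the region where $\delta\exp\left(-2\ell(\delta-1)\pi/(K\omega)\right)<1$; as written, the sentence ``the condition $A+\lambda\leq M$ is precisely what forces $\delta u^{\delta-1}<1$'' would fail at and on one side of the BT curve.
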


 Around the transverse intersection of the manifolds, horseshoe dynamics occurs.  Corollary \ref{Corol1} refines the findings of Sections 3.2 and 3.3 of  \cite{Kim}, where the resonance wedges are projection of bifurcation surfaces. See also   \cite{PK2002}.
The `necessity' of a Hopf bifurcation surface to explain the transition from an invariant torus to chaos has been raised in \cite{Peckman90}.  
 
Strange attractors contribute to the richness and complexity of a dynamical system. Sinai-Bowen-Ruelle (SRB) measures represent visible statistical laws in non-uniformly hyperbolic systems. More details of these concepts may  be found  in Appendices \ref{ss: strange attractor} and \ref{ss: SRB measure}. Chaos associated with them is both \emph{sustained in the space of parameters} and \emph{observable}.   Next result ensures the existence of open regions in the parameter region \eqref{paramater_set} for which we observe strange attractors with SRB measures and historic behaviour (see Appendix \ref{ss: historic behaviour}):

\begin{maincorollary}
\label{main_thC}
Let $f_{\mu} \in\mathfrak{X}_\emph{Byk}^3(\EU^3)$. There is an open region in the space of parameters, say $\tilde{\mathcal{V}}\subset \mathcal{V}$, such that if $\mu\in\tilde{\mathcal{V}}$, then  $\mathcal{F}_\mu$ exhibits:\\
\begin{enumerate}
\item strange attractors with an ergodic SRB measure occuring in a subset of $\tilde{\mathcal{V}}$ with positive Lebesgue measure;\\
\item an open set of initial conditions exhibiting historic behaviour occurring in a subset of $\tilde{\mathcal{V}}$ whose topological closure is $\tilde{\mathcal{V}}$.\footnote{The open set is defined in the phase space; the set $\tilde{\mathcal{V}}$ is defined in the space of parameters.} \\
\end{enumerate}
 \end{maincorollary}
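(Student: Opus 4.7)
The plan is to derive the statement from part~\ref{homoclinic} of Corollary~\ref{Corol1} by combining two classical perturbation theorems with the bifurcation structure already established. That item provides surfaces in $\mathcal{V}$ along which $\mathcal{F}_\mu$ presents a generic quadratic homoclinic tangency associated with a dissipative periodic point, together with nearby open regions of transverse intersection of the invariant manifolds of the same saddle. This is precisely the configuration required both by the theory of H\'enon-like attractors and by the more recent construction of contracting wandering domains due to Kiriki and Soma.

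First I would fix a point $\mu_\star$ on the tangency surface and parametrize a short segment transverse to it by a single variable $s\in(-\eta,\eta)$, freezing the other two coordinates of $\mu$. Since $\delta=\delta_1\delta_2\gtrsim 1$ and the second component of \eqref{first_return_map} is strongly contracting in $y$, the relevant saddle is sectionally dissipative. An explicit computation starting from \eqref{first_return_map} shows that the tangency is quadratic and that its unfolding is generic, in the sense that the invariant manifolds split with nonzero speed as $s$ varies. I would then invoke the Mora--Viana theorem on generic unfoldings of homoclinic tangencies of dissipative saddles to produce a set $S\subset(-\eta,\eta)$ of positive one-dimensional Lebesgue measure such that, for every $s\in S$, the map $\mathcal{F}_\mu$ possesses a H\'enon-like strange attractor carrying an ergodic SRB measure. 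Foliating an open neighbourhood of $\mu_\star$ by such transverse segments and applying Fubini yields an open region $\tilde{\mathcal{V}}\subset\mathcal{V}$ together with a subset of positive three-dimensional Lebesgue measure inside it on which statement~(1) holds.

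For statement~(2), I would appeal to the Kiriki--Soma construction: near any $C^r$ surface map ($r\ge 2$) exhibiting a homoclinic tangency associated with a dissipative saddle, a $C^r$-dense subset inside some $C^r$-open neighbourhood possesses a non-trivial contracting wandering domain on which every orbit has historic behaviour. Applying this to the tangencies provided by part~\ref{homoclinic} of Corollary~\ref{Corol1} for parameters in $\tilde{\mathcal{V}}$, together with the abundance of secondary tangencies supplied by the Newhouse phenomenon active inside the region bounded by the two tangency surfaces, produces a subset of $\tilde{\mathcal{V}}$ whose topological closure coincides with $\tilde{\mathcal{V}}$ itself and on which an open set of initial conditions in phase space exhibits historic behaviour.

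The main obstacle is ensuring that these perturbation theorems, originally stated in the space of diffeomorphisms, can be realized \emph{within} the three-parameter family $(f_\mu)_{\mu\in\mathcal{V}}$. For Mora--Viana the verification reduces to checking the generic-unfolding condition from the closed-form expression \eqref{first_return_map} and is essentially routine. For Kiriki--Soma the realization inside a finite-dimensional family is more delicate: one must combine the persistence of homoclinic tangencies in Newhouse regions with the flexibility afforded by the three parameters $A$, $\lambda$ and $\omega$, so that each arbitrarily small $C^r$-perturbation prescribed by their construction can effectively be implemented by moving $\mu$ inside $\mathcal{V}$.
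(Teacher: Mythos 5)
Your proposal follows essentially the same route as the paper: both arguments start from the homoclinic tangency surfaces $\textbf{H}^1_\ell$, $\textbf{H}^2_\ell$ of Corollary~\ref{Corol1}, apply Mora--Viana \cite{MV93} to the generic unfolding of a quadratic tangency of a dissipative saddle to obtain a positive Lebesgue measure set of parameters carrying H\'enon-like strange attractors with ergodic SRB measures, and then invoke Theorem~A of Kiriki--Soma \cite{KS} for the historic behaviour statement. Your additional remarks on the transverse one-parameter segments, the Fubini argument, and the subtlety of realizing the Kiriki--Soma perturbations inside the finite-dimensional family are more explicit than the paper's (very terse) proof, but they do not change the underlying argument.
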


  Taking advantage of the existence of the  Hopf bifurcation surfaces, we may use the reasoning of Denjoy \cite{Denjoy} to conclude the existence of a map $H$, arbitrarily $C^1$-close to $\mathcal{F}_{\mu}$,  with a contracting non-trivial wandering domain.  As defined in Appendix \ref{ss: wandering}, a wandering domain for $\mathcal{F}_{\mu}$ may be seen as  a non-empty connected open set whose forward orbit is a sequence of pairwise disjoint open sets.

\begin{maincorollary}
\label{main_thB}
Let $f_{\mu} \in\mathfrak{X}_\emph{Byk}^3(\EU^3)$.  There is an open region in the space of parameters, say $\tilde{\mathcal{V}}\subset \mathcal{V}$, such that if $\mu\in\tilde{\mathcal{V}}$, then:\\
 
\begin{enumerate}
\item  $\mathcal{F}_{\mu}$ exhibits an attracting 2-dimensional torus, which is contractible\footnote{The invariant circles, in the first return map, do not envelop  the phase cylinder.}; \\ 
 
\item there exists a diffeomorphism $H$ arbitrarily $C^1$-close to $\mathcal{F}_{\mu}$, exhibiting a contracting non-trivial wandering domain $D$ for which the union of the $\omega$-limit set of points in $D$  is a nonhyperbolic transitive Cantor set without periodic points.\\

\end{enumerate}

 \end{maincorollary}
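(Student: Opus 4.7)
The plan is to derive both statements from the Hopf bifurcation surface provided by Corollary \ref{Corol1}. Fix $\ell \in \NN$ and consider parameters in a neighborhood of $\omega = \omega^\star_\ell$. By Theorem \ref{thm:0}, a discrete-time Bogdanov-Takens point organizes the local bifurcations of the $(1,\ell)$-fixed point and a smooth Hopf (Neimark-Sacker) surface emanates from it. For $\mu \in \tilde{\mathcal V}$ lying on the supercritical side of this surface but sufficiently close to it, the Neimark-Sacker theorem applied to the planar diffeomorphism $\mathcal F_\mu$ produces an attracting, normally hyperbolic, invariant closed curve $\mathcal C_\mu$ bifurcating from the $(1,\ell)$-fixed point, with diameter tending to zero as $\mu$ approaches the Hopf surface. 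Because $\mathcal C_\mu$ is produced locally around a single fixed point interior to the fundamental $x$-domain of $\mathcal D$, it is null-homotopic in the cylinder; this proves part (1), the suspended object being the contractible attracting two-torus.

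For part (2), I would restrict $\mathcal F_\mu$ to $\mathcal C_\mu$ and study the rotation number $\rho(\mu)$ of the induced orientation-preserving circle diffeomorphism. By continuity of $\rho$ and by the discussion of transitional dynamics in Section \ref{ss: transitional}, $\rho$ is nonconstant on $\tilde{\mathcal V}$, so the set of parameters on which $\rho(\mu)$ is irrational is dense in $\tilde{\mathcal V}$. For any such $\mu_0$, Denjoy's theorem grants a $C^0$-conjugacy of $\mathcal F_{\mu_0}|_{\mathcal C_{\mu_0}}$ to the rigid rotation $R_{\rho(\mu_0)}$. I then invoke Denjoy's classical construction to obtain a $C^1$-small perturbation $g$ of this restricted diffeomorphism: $g$ is still a $C^1$ circle diffeomorphism with the same irrational rotation number $\rho(\mu_0)$, but it admits a wandering interval $I \subset \mathcal C_{\mu_0}$. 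By minimality, the iterates $\{g^n(I)\}_{n\in\NN}$ are pairwise disjoint and the union of their $\omega$-limits is a nonhyperbolic, transitive, periodic-point-free Cantor set $K \subset \mathcal C_{\mu_0}$.

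The main obstacle is to lift $g$ to a $C^1$-small perturbation $H$ of $\mathcal F_{\mu_0}$ on the full cylinder $\mathcal D$. I would pass to adapted coordinates $(\theta, r)$ in a tubular neighborhood $U$ of $\mathcal C_{\mu_0}$ in which $\mathcal F_{\mu_0}$ takes the approximate skew-product form
\[
(\theta, r)\ \longmapsto\ \bigl(\phi(\theta) + \mathcal O(r),\ \lambda(\theta)\, r + \mathcal O(r^2)\bigr),
\]
with $|\lambda(\theta)| < 1$ uniformly in $\theta$, the uniform normal contraction being granted by the strong dissipation $\delta \gtrsim 1$. I then set $H(\theta, r) = \bigl(g(\theta) + \mathcal O(r),\ \lambda(\theta)\, r + \mathcal O(r^2)\bigr)$ inside $U$ and glue it back to $\mathcal F_{\mu_0}$ outside $U$ through a smooth cutoff, so that $\|H - \mathcal F_{\mu_0}\|_{C^1}$ is as small as desired. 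The two-dimensional wandering domain is taken to be $D = I \times (-\varepsilon, \varepsilon)$ in these coordinates. Pairwise disjointness of $\{H^n(D)\}_{n\in\NN}$ follows from disjointness of $\{g^n(I)\}_n$ together with the fact that the fibration over $\mathcal C_{\mu_0}$ is preserved up to higher-order terms in $r$; normal contraction then collapses the $r$-fibre, forcing the $\omega$-limit set of every point of $D$ to coincide with the Cantor set $K$.

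The delicate technical point is to verify that the $\mathcal O(r)$ and $\mathcal O(r^2)$ error terms neither destroy the disjointness of the forward orbit of $D$ nor the existence of a normally attracting $H$-invariant curve close to $\mathcal C_{\mu_0}$. Both follow from the persistence theorem for normally hyperbolic invariant manifolds applied to $H$, combined with the uniform contraction $|\lambda(\theta)| < 1$ which makes the fibered projection onto the (perturbed) invariant curve an $H$-equivariant $C^1$-small correction of $g$; the Denjoy wandering interval then persists verbatim on this new invariant curve, and the $\omega$-limit statement is immediate.
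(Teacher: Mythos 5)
Your proposal is correct and follows essentially the same route as the paper: part (1) from the attracting invariant circle born at the Hopf surface of Corollary \ref{Corol1}, and part (2) by reducing to the circle, arranging an irrational rotation number, and applying the Denjoy construction to create a wandering interval that is then thickened to a contracting wandering domain via the normal contraction. The paper outsources the lifting/gluing step to Theorem B of \cite{Rodrigues2020}, whereas you spell it out with normally hyperbolic persistence; the substance is the same.
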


The proofs of Corollaries \ref{Corol1}, \ref{main_thC} and \ref{main_thB} are performed in Sections \ref{proof_corol1}, \ref{proof_corol3} and \ref{proof_corol2}, respectively. 
In Section \ref{dissection}, we analyse the continuation of these bifurcations by studying the precise expressions of the eigenvalues of $D\mathcal{F}_\mu$ at the $(1, \ell)$-fixed points of $\mathcal{F}_\mu$. We derive an analytical expression for the Hopf and period-doubling bifurcations, as well as for the transitions \emph{node $\leftrightarrow$ focus}. 
These plethora of bifurcations (among others),  limited by two saddle-node bifurcation surfaces,  is what we call a \emph{resonance wedge}. An \emph{Arnold tongue} may be seen as the projection of one of these wedges.

\begin{remark}
\label{rem: weakly}
The Bogdanov-Takens bifurcation will be computed under the assumption that $M= A \pm \lambda$ (see Prop. \ref {propBT}). Since $A$ and $\lambda$ are small, then $M$ must be also small.  This condition is achieved by assuming Hypothesis  \textbf{(P5)}: if $\delta \gtrsim 1$, then $M$ is small (cf.  Figure~\ref{graph_M}).   
\end{remark}

 \begin{figure}[h]
\begin{center}
\includegraphics[height=4cm]{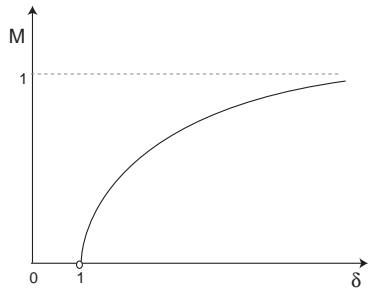}
\end{center}
\caption{\small  Graph of $M(\delta)=   \dpt \delta ^{\frac{1}{1-\delta}}- \delta ^{\frac{\delta}{1-\delta}}$ for $\delta>1$.} 
\label{graph_M}
\end{figure}

\section{The first return map}\label{localdyn}

We analyze the dynamics near  $\Gamma$ through local maps, after selecting appropriate coordinates in neighborhoods of   $O_1$ and $O_2$.

\subsection{Local coordinates}
In order to describe the dynamics around the cycles of $\Gamma$, we use the local coordinates near the equilibria $O_1$ and $O_2$ introduced in  \cite{OS} (cf. \cite{LR2016, Rodrigues2019}).
In these coordinates, we  use cylindrical neighborhoods  $V_1$ and $V_2$  in ${\RR}^3$ of $O_1 $ and $O_2$, respectively, of radius $\rho=\tilde\varepsilon>0$ and height $z=2\tilde\varepsilon$.
After a linear rescaling, we  assume   $\tilde\varepsilon=1$.

\medbreak
The boundaries of $V_1$ and $V_2$ consist of three components: the cylinder wall parametrised by $x\in \RR\pmod{2\pi}$ and $|y|\leq 1$ with the cover $$ (x,y)\mapsto (1 ,x,y)=(\rho ,\theta ,z)$$ and two discs, the top and bottom of the cylinder. We consider polar coverings of these disks $$(r,\phi )\mapsto (r,\phi , \pm 1)=(\rho ,\theta ,z)$$
where $0\leq r\leq 1$ and $\varphi \in \RR\pmod{2\pi}$.
In $V_1$, we use the notation: \\
\begin{itemize}
\item
$\In(O_1)$, the cylinder wall of $V_1$,  consists of points that go inside $V_1$ in positive time; \\
\item
$\Out(O_1)$, the top and bottom of $V_1$,  consists of points that go outside $V_1$ in positive time. \\
\end{itemize}
We denote by $\In^+(O_1)$ the upper part of the cylinder, parametrised by $(x,y)$, $y\in\, ]\, 0,1]$ and by $\In^-(O_1)$ its lower part. The local stable manifold of $O_1$, $W^s_\loc(O_1)$, corresponds to the circle parametrised by $ y=0$.
 The cross-sections around $O_2$ are dual of the previous sections. The set $W^s_\loc (O_2)$ corresponds to the intersection of the $z$-axis with the top and bottom of  $V_2$; these two intersection points will be the origin of its coordinates. The set 
$W^u_\loc (O_2)$ is defined by $y=0$ and: \\

\begin{itemize}
\item
$\In(O_2)$, the top and bottom of $V_2$,  consists of points that go inside $V_2$ in positive time; \\
\item
$\Out(O_2)$,  the cylinder wall  of $V_2$,  consists of points that go outside  $V_2$ in positive  time, with $\Out^+(O_2)$ denoting its upper part, parametrised by $(x,y)$, $y\in \, ] \,0,1]$ and $\Out^-(O_2)$  its lower part parametrised by $(x,y)$, $x\in \RR$ and $y\in \, [\, -1,0\, [$. \\
\end{itemize}

By construction, the flow is transverse to these cross-sections and the boundaries of $V_1$ and of $V_2$ may be written as the topological closure of  $\In(O_1) \cup \Out (O_1)$ and  $\In(O_2) \cup \Out (O_2)$, respectively. 
\begin{remark}
The orientation of the angular coordinate near $O_2$ is chosen to be \emph{compatible} with the direction induced by the angular coordinate in $O_1$.
\end{remark}

\subsection{Local maps}
Adapting \cite{OS}, the trajectory of  a point $(x,y) \in \In^+(O_1)$, leaves $V_1$ at
 $\Out(O_1)$ at
\begin{equation}
\Phi_{1 }(x,y)=\left(y^{\delta_1} + S_1(x,y; A, \lambda, \omega),x-\frac{\omega \, \ln y}{ E_1}+S_2(x,y; A, \lambda, \omega) \right)=(r,\phi)
\label{local_v}
\end{equation}
 where $\dpt \delta_1=\frac{C_{1 }}{E_{1}} > 1$, 
$S_1$, $S_2$ are smooth functions which depend on the parameters $A$, $\lambda$ and $\omega$ and satisfy:
\begin{equation}
\label{diff_res}
\left| \frac{\partial^{k+l+m}}{\partial x^k \partial y^l  \partial A ^{m_1}   \partial \lambda ^{m_2}   \partial \omega ^{m_3}   } S_i(x, y;A, \lambda, \omega)
\right| \leq \, C\, \,  y^{\delta_1 + \sigma - l},
\end{equation}
where the numbers $C$, $\sigma$ are positive constants and $k, l, m_1, m_2, m_3$ are non-negative integers. In a similar way, a point $(r,\phi)$ in $\In(O_2) \backslash W^s_{\loc}(O_2)$ leaves $V_2$ at $\Out(O_2)$ at
\begin{equation}
\Phi_{2 }(r,\phi )=\left(\phi -\frac{\omega\, \ln r}{E_2} + R_1(r,\phi ; A, \lambda, \omega),r^{\delta_2 }+R_2(r,\phi; A, \lambda, \omega )\right)=(x,y)
\label{local_w}
\end{equation}
where $\dpt \delta_2=\frac{C_{2 }}{E_{2}} >1 
$ and $R_1$, $R_2$ satisfy a  condition similar  to (\ref{diff_res}). The expressions $S_1$, $S_2$,  $R_1$, $R_2$ correspond to terms that vanish when $y$ and $r$ go to zero.   
\bigbreak

\subsection{Global maps}\label{transitions}
The coordinates on $V_1$ and $V_2$ are chosen so that $[O_1\rightarrow O_2]$ connects points with $z>0$ (resp. $z<0$) in $V_1$ to points with $z>0$  (resp. $z<0$) in $V_2$. Points in $\Out(O_1) \setminus W^u_{\loc}(O_1)$ near $W^u(O_1)$ are mapped into $\In(O_2)$ along a flow-box around each of the connections of $[O_1\rightarrow O_2]$. Assuming \textbf{(P9)},  the transition
$$\Psi_{1 \rightarrow  2}\colon \quad \Out(O_1) \quad \rightarrow  \quad \In(O_2)$$
does not depend neither on $\lambda, A$ nor $\omega$ and is the \emph{Identity map}, a choice compatible with  \textbf{(P5)} and \textbf{(P7)}. 
 Denote by $\eta$ the  map:
$$\eta=\Phi_{2} \circ \Psi_{1 \rightarrow  2} \circ \Phi_{1 }\colon \quad \In(O_1) \backslash W^s_\loc (O_1) \quad \rightarrow  \quad \Out(O_2).$$
Omitting the higher order terms that appear in  \eqref{local_v} and \eqref{local_w}, for $y>0$ we may write:
\begin{equation}\label{eqeta}
\eta(x,y)=\left(x-K \omega \ln  y \,\,\,\pmod{2\pi}\, \, , \,y^{\delta} \right)
\end{equation}
with
\begin{equation}\label{delta e K}
\delta=\delta_1 \delta_2 \gtrsim1 \qquad \text{and} \qquad  K= \frac{C_1 +E_2 }{E_1 E_2} > 0.
\end{equation}

A similar expression is valid for $y<0$, after suitable changes. 
Using  \textbf{(P8)} and \textbf{(P9)}, for all $A> \lambda\geq 0$ and $\omega \in \RR^+$, we may define the  map
$\Psi_{2 \rightarrow  1}:\Out(O_2)\rightarrow  \In(O_1)$
that depends on the parameters $\lambda$ and $A$ (see Figure \ref{transitions}):
\begin{equation}\label{transition21}
\Psi_{2 \rightarrow  1}(x,y)=\left(x, \,y +A+\lambda \Phi(x) \right) \qquad \text{where}\quad \Phi(x)=\sin x.
\end{equation}
Observe that $\Psi_{2 \rightarrow  1}$ does not depend on $\omega$.  
\begin{figure}[h]
\begin{center}
\includegraphics[height=5.0cm]{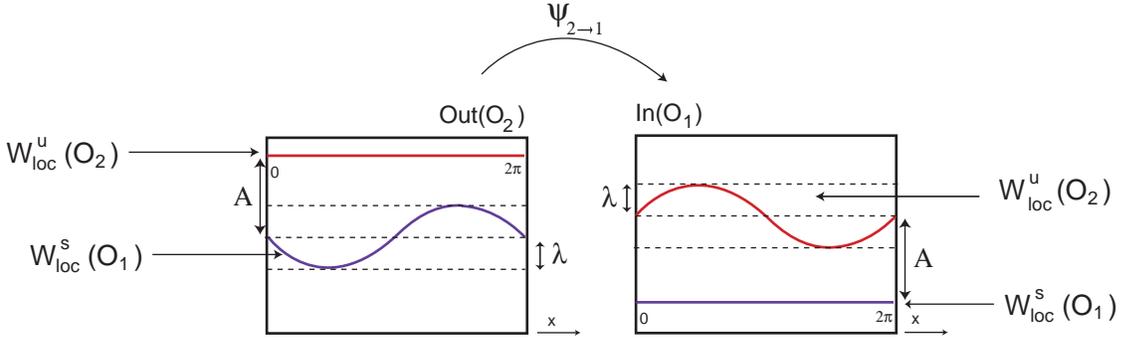}
\end{center}
\caption{\small Geometry of the global map $\Psi_{2 \rightarrow  1}$. Scheme inspired by Gaspard \cite{Gaspard}.} 
\label{transitions}
\end{figure}
The expression of the first return map $\mathcal{F}_{\mu}$ follows by composing the local and global maps constructed above.  Let
\begin{equation}\label{first return 1}
\mathcal{F}_{\mu} =  \eta \circ \Psi_{2 \rightarrow  1}=  \quad  \mathcal{D} \subset \Out(O_2) \quad \rightarrow  \quad \mathcal{D} \subset \Out(O_2)
\end{equation}
be the first return map to $\Out(O_2)$, where $\mathcal{D}\neq \emptyset$ is the set of initial conditions $(x,y) \in \Out(O_2)$ whose solution returns to $\Out(O_2)$. Composing $\eta$ (\ref{eqeta}) with  $\Psi_{2 \rightarrow  1}$ (\ref{transition21}), the  expression of $\mathcal{F}_{\mu}$ is given by
\label{first1}
\begin{eqnarray*}
\mathcal{F}_{\mu}(x,y)&=& \left[ x-K\, \omega \ln  \left[y+A+{\lambda}\sin x\right] \,   \pmod{2\pi}, \, \, \left(y + A+{\lambda}\sin x\right)^\delta\right]\\
&=:& \left(\mathcal{F}_1^{\mu}(x,y), \mathcal{F}_2^{\mu}(x,y)\right).
\end{eqnarray*}
 The following remarks will be useful in the sequel.

\begin{remark}
The map $\mathcal{F}_{\mu}$ is $C^3$ and is well defined in a compact subset of $\Out(O_2)$. Thus, results on \emph{circloid maps} \cite{PPS} may be applied to $\mathcal{F}_{\mu}$.
\end{remark}

\begin{remark}
\label{rig rotation}
If $A=\lambda=0$ and $\omega \in \RR^+$, then $\mathcal{F}_1^{(0,0,\omega)}(x,y)=x-K\, \omega \ln y $ may be identified with a \emph{rigid rotation} on $\EU^1=\RR/2\pi$ and $\mathcal{F}_2^{(0,0,\omega)}(x,y)= y^\delta $ defines an invariant contracting foliation. 
\end{remark}

\begin{remark}
\label{rem_dissipative}
 Since $\delta>1$  and $1>\varepsilon>A>y\geq 0$, we may write:
\begin{eqnarray*}
\left|\frac{\partial  \mathcal{F}_2^{\mu} (x,y)}{\partial y } \right|&=& \left|\delta (y + A + \lambda\sin x)^{\delta-1} \right|= \mathcal{O}((A+\lambda)^{\delta-1})<1,
\end{eqnarray*}
where $\mathcal{O}\left((A+\lambda)^{\delta-1}\right)$ represents the standard \emph{Landau notation}.
This means that, under Hypotheses \textbf{(P1)--(P8b)--(P9)}, if  $A>0$ small enough, then  $\mathcal{F}_2^{\mu}$ is a contraction in the variable $y$.
\end{remark}

\section{Proof of Theorem \ref{thm:0} and its corollaries}
\label{proof_th_A}
The main goal of this section is to prove Theorem \ref{thm:0} and its consequences. We start this task by giving preparatory results. 

\subsection{Fixed points of $\mathcal{F}_{\mu}$ and their stability}
The $(1,\ell)$--fixed points of $\mathcal{F}_{\mu}$ in $\mathcal{D}\cap \Out(O_2)$, say $p_\ell=(x_\ell, y_\ell)\in \Out(O_2)$, $\ell \in \NN$, are solutions of
:\begin{equation*}
\left\{
\begin{array}{l}
x-K \, \omega  \log (y+A+\lambda \sin x)=x+2\ell \pi\\ \\
(y+A+\lambda \sin x)^\delta =y.
\end{array}
\right.
\end{equation*}
Therefore,
\begin{equation}
\label{primeira}
y_\ell+A+\lambda \sin x_\ell =\exp \left(\frac{-2\ell \pi}{K\, \omega}\right)\qquad \text{and} \qquad y_\ell =  \exp \left(\frac{-2\ell \pi \delta }{K \, \omega}\right) ,
\end{equation}
which implies that
\begin{equation}\label{lambda maximo}
A+\lambda \sin x_\ell = \exp \left(\frac{-2\ell \pi}{K \, \omega}\right) - \exp \left(\frac{-2\ell \delta \pi}{K \, \omega}\right).
\end{equation}
For $\ell \in \NN$, define the real-valued map
\begin{equation}
\label{def_G}
G_\ell(\omega)= \exp \left(\frac{-2\ell \pi}{K\,  \omega}\right) - \exp \left(\frac{-2\ell \delta \pi}{K\, \omega}\right), \qquad \omega\in \RR^+.
\end{equation}
whose graph is depicted in Figure \ref{graph_G1}, for different values of $\ell \in \NN$. Observe that:
\begin{equation}
\label{fixed_points}
A+\lambda \sin x_\ell =G_\ell ( \omega).
\end{equation}

\begin{figure}[h]
\begin{center}
\includegraphics[height=5.5cm]{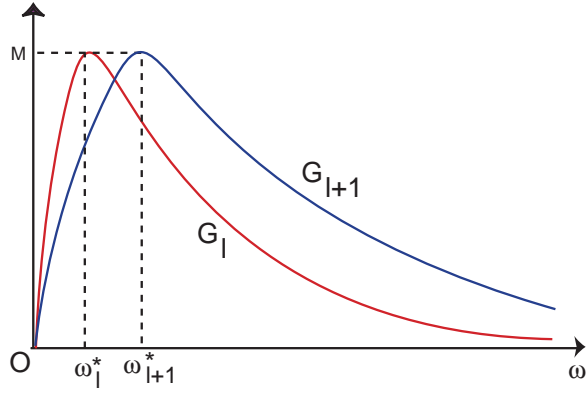}
\end{center}
\caption{\small The map $G_\ell$ has a global maximum $\dpt M= \delta ^{\frac{1}{1-\delta}}- \delta ^{\frac{\delta}{1-\delta}}$ at $\dpt \omega^\star_\ell= \frac{2\ell \pi (\delta-1)}{\ln \delta}>0$.} 
\label{graph_G1}
\end{figure}

The next result summarises some basic properties of $G_\ell$.
\begin{lemma}
\label{lemaF}
For  $\ell, \ell_1, \ell_2 \in \NN$, the following assertions are true:

\begin{enumerate}
\item The map $G_\ell$ has a global maximum $\dpt M= \delta ^{\frac{1}{1-\delta}}- \delta ^{\frac{\delta}{1-\delta}}$ at $\dpt \omega^\star_\ell= \frac{2\ell \pi (\delta-1)}{\ln \delta}>0$. \\
\item The maximum of $G_\ell$ is independent of $\ell$. \\
\item $\dpt \lim_{\omega\rightarrow 0^+}G_\ell(\omega)=\lim_{\omega\rightarrow +\infty}G_\ell (\omega)=0$. \\
\item if $\ell_1<\ell_2$, then $ \omega^\star_{\ell_1}<\omega^\star_{\ell_2}$. \\
\end{enumerate}

\end{lemma}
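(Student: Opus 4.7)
My plan is to treat Lemma \ref{lemaF} as a routine exercise in one-variable calculus applied to the explicit expression \eqref{def_G} for $G_\ell$, then extract the values of the critical point and the maximum. Throughout, the positivity of $K$, the strict inequality $\delta>1$ (item (P5)) and $\ell\in\NN$ are used to guarantee that all quantities have the advertised signs.

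For item (1), I would differentiate \eqref{def_G}. Setting $\alpha_\ell(\omega)=2\ell\pi/(K\omega)$, so that $\alpha_\ell'(\omega)=-2\ell\pi/(K\omega^2)<0$, we have
\begin{equation*}
G_\ell'(\omega)=\frac{2\ell\pi}{K\omega^2}\Bigl(e^{-\alpha_\ell(\omega)}-\delta\,e^{-\delta\alpha_\ell(\omega)}\Bigr).
\end{equation*}
The prefactor is nonzero on $\RR^+$, so zeros of $G_\ell'$ correspond to $e^{(\delta-1)\alpha_\ell(\omega)}=\delta$, i.e.\ to $\alpha_\ell(\omega)=\ln\delta/(\delta-1)$. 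Solving for $\omega$ yields the unique critical point $\omega^\star_\ell=2\ell\pi(\delta-1)/(K\ln\delta)$, which matches the stated formula (up to absorbing $K$ as elsewhere in the paper). Substituting back and using $\alpha_\ell(\omega^\star_\ell)=\ln\delta/(\delta-1)$ gives
\begin{equation*}
G_\ell(\omega^\star_\ell)=\delta^{-1/(\delta-1)}-\delta^{-\delta/(\delta-1)}=\delta^{1/(1-\delta)}-\delta^{\delta/(1-\delta)}=M.
\end{equation*}
A sign analysis of $G_\ell'$ (negative for $\omega>\omega^\star_\ell$ because $e^{(\delta-1)\alpha_\ell}<\delta$ there, and positive for $\omega<\omega^\star_\ell$) shows this critical point is a strict global maximum on $\RR^+$. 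Item (2) is then immediate from the closed form, since the right-hand side $M$ does not involve $\ell$.

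Item (3) will follow from evaluating the two exponentials at the endpoints. As $\omega\to 0^+$, both $e^{-\alpha_\ell(\omega)}$ and $e^{-\delta\alpha_\ell(\omega)}$ tend to $0$ because $\alpha_\ell(\omega)\to+\infty$; as $\omega\to+\infty$, $\alpha_\ell(\omega)\to 0^+$ so both exponentials tend to $1$, and their difference vanishes. (The positivity of $G_\ell$ on $\RR^+$, which was implicit in the sign argument above, comes from the same comparison $e^{-\alpha_\ell}>e^{-\delta\alpha_\ell}$ forced by $\delta>1$ and $\alpha_\ell>0$.)

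Finally, item (4) is transparent from the formula $\omega^\star_\ell=2\ell\pi(\delta-1)/(K\ln\delta)$: since $\delta>1$ makes the factor $(\delta-1)/\ln\delta$ strictly positive and independent of $\ell$, the map $\ell\mapsto\omega^\star_\ell$ is linear with positive slope, so $\ell_1<\ell_2$ forces $\omega^\star_{\ell_1}<\omega^\star_{\ell_2}$. There is no real obstacle here; the only mild subtlety is confirming that the unique interior critical point is indeed a global maximum (not a saddle behaviour), which is handled by the monotonicity-of-sign argument for $G_\ell'$ rather than appealing to a second-derivative test.
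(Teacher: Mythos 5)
Your proof is correct and follows essentially the same route as the paper: differentiate $G_\ell$, locate the unique critical point, verify the sign change of $G_\ell'$, evaluate to get $M$, and read off the limits and the monotonicity in $\ell$ from the explicit formulas. The only difference is that you carry the factor $K$ consistently inside the exponentials (obtaining $\omega^\star_\ell=2\ell\pi(\delta-1)/(K\ln\delta)$), whereas the paper silently drops $K$ mid-computation to match its stated constant; you correctly flag this as a normalisation issue rather than a mathematical one.
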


\begin{proof} Differentiating $G_\ell$ with respect to $\omega$ and multiplying by $K>0$, we get:
\begin{eqnarray*}
K \, G_\ell'(\omega)&=&  \frac{2\ell \pi}{\omega^2}\exp \left(\frac{-2\ell \pi}{\omega}\right)-  \frac{2\ell \delta \pi}{\omega^2}\exp \left(\frac{-2\ell \delta \pi}{\omega}\right) \\ \\
&=&  \frac{2\ell \pi}{\omega^2} \left[ \exp \left(\frac{-2\ell \pi}{\omega}\right)-  \delta \exp \left(\frac{-2\ell \delta \pi}{\omega}\right)\right] \\ \\
&=&  \frac{2\ell \pi}{\omega^2} \exp \left(\frac{-2\ell \pi}{\omega}\right) \left[ 1-  \delta \exp \left(\frac{-2\ell (\delta-1) \pi}{\omega}\right)\right].
\end{eqnarray*}
Since $\dpt \frac{2\ell \pi}{\omega^2} \exp \left(\frac{-2\ell \pi}{\omega}\right)>0$ for all $\omega\in \RR^+$ and $\ell \in \NN$, we may conclude that:

\begin{eqnarray*}
G_\ell'(\omega)=0&\Leftrightarrow&  1-  \delta \exp \left(\frac{-2\ell (\delta-1) \pi}{\omega}\right)=0 \\ \\
&\Leftrightarrow&  \exp \left(\frac{-2\ell (\delta-1) \pi}{\omega}\right)=1/\delta \\ \\
&\Leftrightarrow&   \frac{2\ell (\delta-1) \pi}{\omega} =\ln \delta  \\ \\
&\Leftrightarrow&  \omega= \frac{2\ell \pi (\delta-1)}{\ln \delta}=: \omega^\star_\ell. \\
\end{eqnarray*}
As suggested  in Figure \ref{graph_G1}, it is easy to check that $G_\ell'(\omega)>0$ if $\omega \in \, \left ]\, 0,  \omega^\star_\ell \,  \right [$ and $G'_\ell(\omega)<0$ otherwise. This implies that $G_\ell$ is increasing in $\left ]\, 0,  \omega^\star_\ell \,  \right [$ and decreasing in $]\, \omega^\star_\ell, \infty[$. Furthermore,\\
\begin{eqnarray*}
G_\ell(\omega^\star_\ell)&=& \exp \left(\frac{-2\ell \pi \ln \delta }{2 \ell \pi (\delta-1) }\right) - \exp \left(\frac{-2\ell \delta \pi \ln \delta }{2 \ell (\delta-1) \pi }\right)  \\ \\
&=& \exp \left(\frac{ \ln \delta }{1-\delta }\right) - \exp \left(\frac{ \delta  \ln \delta }{ 1-\delta }\right) \\ \\
&=&\dpt  \delta ^{\frac{1}{1-\delta}}- \delta ^{\frac{\delta}{1-\delta}}=:M. 
\end{eqnarray*}

  The following two limits are zero as a result of the analytic expression of $G_\ell$:
  \begin{eqnarray*}
\lim_{\omega \rightarrow 0^+} \left[ \exp\left(\frac{2\ell \pi}{K\, \omega}\right)  -\exp\left(\frac{2\ell \delta \pi}{K\, \omega}\right)                       \right] = 0= \lim_{\omega \rightarrow +\infty} \left[ \exp\left(\frac{2\ell \pi}{K\, \omega}\right)  -\exp\left(\frac{2\ell \delta \pi}{K\, \omega}\right)                       \right]. 
\end{eqnarray*}
\bigbreak
The last assertion follows straightforwardly from the expression of $\omega_\ell^\star$.
\end{proof}

\bigbreak

In order to determine the Lyapunov stability of the fixed points of $\mathcal{F}_\mu$, we compute the derivative of $\mathcal{F}_{\mu}$, at a general $(1, \ell)$--fixed point $(x_\ell,y_\ell)$.

\begin{equation}
\label{matrix1}
D \mathcal{F}_{\mu} (x_\ell,y_\ell)= 
\left(
\begin{array}{lr} \dpt 1 -  \frac{K \, \omega \lambda \cos x_\ell}{y_\ell+A + \lambda \sin x_\ell }&\dpt-\frac{K \, \omega}{y_\ell+A+  \lambda \sin x_\ell}\\
&\\
\lambda \delta (y_\ell+A+ \lambda \sin x_\ell)^{\delta-1}  \cos x_\ell\qquad  &\delta (y_\ell+A + \lambda \sin x_\ell)^{\delta-1} 
 \end{array} \right).
 \end{equation}
\bigbreak
\bigbreak

In order to find the  $(1, \ell)$--fixed points of $\mathcal{F}_{\mu}$, we need to solve the equation: 
\begin{equation}
\label{BB21}
\varphi(x)=G_\ell ( \omega), \qquad \text{where} \qquad  \varphi(x)=A+\lambda \sin x.
\end{equation}

For $A>\lambda>0$ fixed and $\omega \in \RR^+$, the graph of the left hand side 
of \eqref{fixed_points}, say $\varphi(x) $, depends on $x \in [0,2\pi]$   and  does not depend on $\omega$. On the other hand, the graph of the right hand side 
of \eqref{fixed_points}  does  not depend on $x$.

\begin{figure}[h]
\begin{center}
\includegraphics[height=4.0cm]{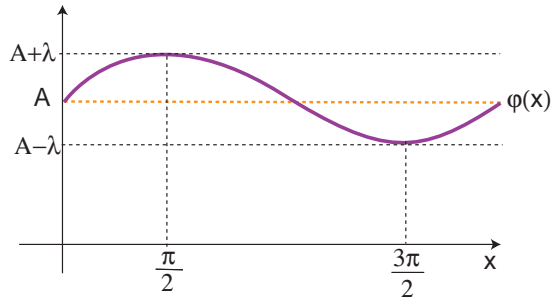}
\end{center}
\caption{\small Graph of $\varphi(x)=A+\lambda \sin x$, $x\in [0, 2\pi]$.}
\label{graph_phi1}
\end{figure}

As illustrated in Figures \ref{graph_phi1} and  \ref{graph_phi2}, finding $(1, \ell)$-fixed points of $\mathcal{F}_\mu$ amounts to intersect the graph of $\varphi(x)$ with a horizontal line.
The line moves first up and then down, as $\omega$ increases.
Since the range of $\varphi$ is the interval $\left[A -\lambda, A+\lambda\right]$, and the range of $G_\ell( \omega)$ is the interval $\left(0,M\right]$, the geometry of the solution set depends on the relative positions of these intervals. From now on, we use the inequality $M \geq A+\lambda$, $A \in [0, \varepsilon]$ -- see \eqref{paramater_set}. The analytic treatment of the other cases are similar to the approach of Section 5 of \cite{LR2020}.

As $\omega$ increases from $0$, there is a threshold value $ \omega_1$ for which the horizontal line at height $G_\ell(\omega_1)$ touches the graph of $\varphi$ at $x=3\pi/2$. At this point we have $\sin(x)=-1$. As $\omega$ increases further, each tangency unfolds as two intersection points of the graph with the horizontal line. There is a saddle-node at the points $\left(x^{(1)}, G_\ell( \omega_1)\right)= \left(3\pi/2,\, G_\ell(\omega_1)\right)$, as we will see in Proposition \ref{propBT}. The surface $G_\ell(\omega)=A\pm \lambda$ defines the boundaries of the $(1,\ell)$--\emph{resonance wedge}. 
The horizontal line may move further up and a pair of  solutions come together at a second saddle-node at
$
\left(x^{(2)}, G_\ell( \omega_2)\right))= \left({\pi}/{2},G_\ell( \omega_2)\right)
$
and reappear at a saddle-node at
$
\left(x^{(3)}, G_\ell( \omega_3)\right)= \left({\pi}/{2},G_\ell( \omega_3)\right)
$
coming together finally at
$
\left(x^{(4)}, G_\ell( \omega_4)\right)= \left({3\pi}/{2},G_\ell( \omega_4)\right).
$
The evolution of the geometry of solutions of \eqref{fixed_points}, as $\omega$ varies, is illustrated on the right side of Figure \ref{graph_phi2}.
We show below that, at these points, the map $D\mathcal{F}_{\mu}$ has an eigenvalue equal to 1.

\bigbreak
Based on \eqref{primeira}, for each $\ell \in \NN$, define the map 
\begin{equation}
\label{def_y}
y(\omega)= \exp \left(\frac{-2\ell \pi \delta }{K\, \omega}\right) ,\qquad  \text{with}\qquad \omega \in \RR^+.
\end{equation}

\subsection{Double eigenvalue 1}
A discrete-time Bogdanov-Takens  bifurcation occurs when the maximum of $G_\ell$ coincides  with either  the minimum or the maximum of $\varphi$. In this subsection, we check the necessary linear conditions for this  bifurcation.  In this section, we implicitly use the fact that the network is weakly attracting (see Remark \ref{rem: weakly}).

\begin{proposition}\label{propBT}
For $G_\ell(\omega_\ell^\star)=A\pm \lambda$, the derivative $D \mathcal{F}_{\mu}\left(x^{(N)},y(\omega_\ell^\star)\right)$ at a solution of  \eqref{fixed_points}  has 1 as  a double eigenvalue and is not the identity, for $N=1,..., 4$.
\end{proposition}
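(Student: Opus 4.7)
The plan is to exploit the rigid structure of the matrix \eqref{matrix1} at the four critical saddle-node points $x^{(N)}$. At $\omega=\omega_\ell^\star$, the condition $G_\ell(\omega_\ell^\star)=A\pm \lambda$ combined with \eqref{fixed_points} forces $\sin x_\ell=\pm 1$, hence $x_\ell\in \{\pi/2,3\pi/2\}$, and in particular $\cos x_\ell=0$. This single observation instantly kills the $(1,1)$-correction term and the entire $(2,1)$-entry in \eqref{matrix1}, so the derivative degenerates into the upper-triangular form
\begin{equation*}
D\mathcal{F}_{\mu}\left(x^{(N)},y(\omega_\ell^\star)\right)=
\begin{pmatrix}
1 & \dpt -\frac{K\,\omega_\ell^\star}{y_\ell+A+\lambda\sin x_\ell} \\[6pt]
0 & \dpt \delta (y_\ell+A+\lambda\sin x_\ell)^{\delta-1}
\end{pmatrix}.
\end{equation*}

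Next, I would pin down the $(2,2)$-entry. From \eqref{primeira}, the fixed-point relation reads $y_\ell+A+\lambda\sin x_\ell=\exp(-2\ell\pi/(K\omega))$, so the $(2,2)$-entry equals $\delta\exp\bigl(-2\ell\pi(\delta-1)/(K\omega)\bigr)$. The key computational step is then to substitute $\omega=\omega_\ell^\star$: the critical-point condition $G_\ell'(\omega_\ell^\star)=0$, as derived in the proof of Lemma \ref{lemaF}, is equivalent to
\begin{equation*}
\delta\,\exp\!\left(\frac{-2\ell\pi(\delta-1)}{K\,\omega_\ell^\star}\right)=1,
\end{equation*}
so the $(2,2)$-entry equals exactly $1$. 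Therefore both eigenvalues of the (triangular) matrix equal $1$.

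Finally, to rule out that this is the identity matrix, I need only observe that the $(1,2)$-entry $-K\omega_\ell^\star/(y_\ell+A+\lambda\sin x_\ell)=-K\omega_\ell^\star\exp(2\ell\pi/(K\omega_\ell^\star))$ is strictly negative since $K,\omega_\ell^\star>0$ and the exponential is positive. Hence $D\mathcal{F}_{\mu}$ is a non-trivial Jordan block with double eigenvalue $1$, as required. I do not expect any serious obstacle: the whole argument is a short calculation that relies on two already-established facts (the fixed-point formula \eqref{primeira} and the critical-point identity from Lemma \ref{lemaF}), combined with the geometric observation that the extrema of $\varphi(x)=A+\lambda\sin x$ lie exactly at $x=\pi/2$ and $x=3\pi/2$, where $\cos x$ vanishes. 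The mild subtlety is only to confirm that all four labels $N=1,\dots,4$ indeed correspond to one of these two $x$-values (two of them with $G_\ell(\omega_\ell^\star)=A+\lambda$, two with $A-\lambda$), which is clear from the geometric description preceding the statement.
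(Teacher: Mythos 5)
Your proposal is correct and follows essentially the same route as the paper: evaluate \eqref{matrix1} at the extrema of $\varphi$ where $\cos x=0$ so the matrix becomes upper triangular, use the fixed-point relation \eqref{primeira} together with the criticality condition $G_\ell'(\omega_\ell^\star)=0$ to show the $(2,2)$-entry equals $1$, and rule out the identity via the nonvanishing $(1,2)$-entry. If anything, your version is marginally cleaner in that it invokes the defining property $G_\ell'(\omega_\ell^\star)=0$ directly rather than substituting the explicit formula for $\omega_\ell^\star$, sidestepping the paper's inconsistent bookkeeping of the constant $K$ inside the exponentials.
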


\begin{figure}[h]
\begin{center}
\includegraphics[height=4.0cm]{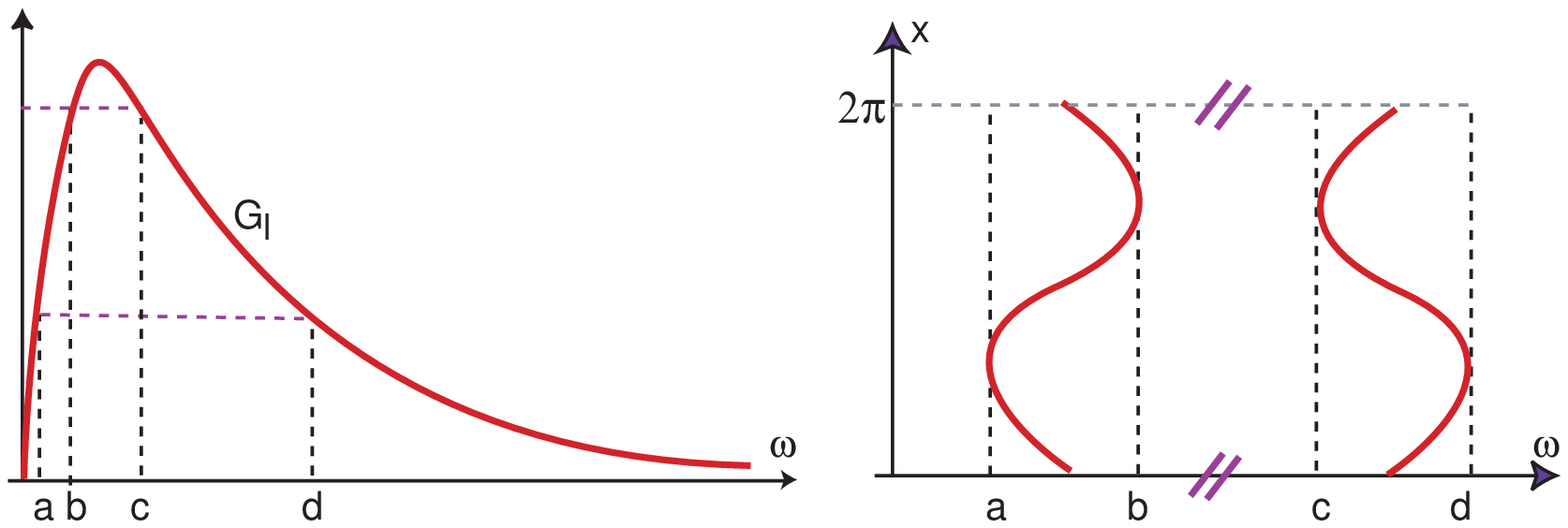}
\end{center}
\caption{\small Graph of $G_\ell$ and evolution of the fixed points when $\omega$ varies. $a=\omega_1$, $b=\omega_2$, $c=\omega_3$ and $d=\omega_4$. Double bars mean that the sides are identified.  }
\label{graph_phi2}
\end{figure}
\begin{proof}
Computing the derivative
$D \mathcal{F}_{\mu}$ at the  points $\left(x^{(N)},y(\omega_N)\right)$, $N=1,\ldots,4$, where $\sin(x_N)=\pm 1$, we  get:
$$
 D \mathcal{F}_{\mu} \left(x^{(N)},y( \omega_N)\right)=
 \left(
\begin{array}{lr} 1&\dpt-\frac{K\omega}{y( \omega_N)+A\pm\lambda }\\
&\\
0\qquad &\delta \left(y( \omega_N)+A\pm \lambda\right) ^{\delta-1} 
 \end{array} \right)
$$
At $\left(x^{(N)},y(\omega_N)\right)$ the Jacobian matrix is triangular and so the two eigenvalues are 
$$\Delta_1=1 \qquad \text{and} \qquad \Delta_2=\delta (y(\omega_N)+A\pm \lambda)^{\delta-1} >0.$$
Since $\omega_\ell^\star$ was defined to be the value of $ \omega$ where the function $G_\ell $ defined in \eqref{def_G} has a global maximum, then $\dfrac{dG_\ell}{dt} (\omega_\ell^\star)=0$.
In particular,

\begin{equation}
\begin{array}{lcl}
\Delta_2&=& \delta (y(\omega_\ell^\star)+A\pm \lambda)^{\delta-1} \\ \\
&=& \dpt  \delta \left(\exp \left( \frac{-2\ell \pi }{\omega_\ell^\star}\right)  \right)^{\delta-1} \\ \\
&=& \dpt \delta \left(\exp \left( \frac{-2\ell \pi (\delta-1) }{\omega_\ell^\star}\right)  \right) \\ \\
&=& \dpt \delta \left(\exp \left( \frac{-2\ell \pi (\delta-1) \ln \delta }{2\ell \pi (\delta-1) } \right)  \right) \\ \\
&=& \dpt \delta \exp (\ln \delta^{-1})= 1.  \\ \\
 \end{array}
\end{equation}

Hence the derivative $D \mathcal{F}_{\mu}$, at the  solutions of \eqref{fixed_points} with $G_\ell(\omega_\ell^\star)=A\pm\lambda$, has a double eigenvalue equal to 1, and is not the identity as we may confirm in \eqref{matrix1}.
\end{proof}

\subsection{Proof of Theorem \ref{thm:0}}

\label{prova_pd}

Proposition \ref{propBT} indicates a (possible) bifurcation  of codimension 2, corresponding to a curve in the 3-dimensional parameter space $\mu=(A, \lambda, \omega)$,
where we expect to find a discrete-time Bogdanov-Takens bifurcation.
This bifurcation occurs at points where 1 is a double eigenvalue,  the derivative is not the identity and  the map $\mathcal{F}_\mu$ satisfies a finite number of non-degeneracy conditions. In this section, we check these nonlinear conditions.
We recall the main ideas of \cite{BRS96, Yagasaki} adapted to our purposes.
\bigbreak

Along the surface defined by $\pm \lambda = A - G_\ell (\omega_\ell^\star)$, the map $\mathcal{F}_{\mu} $  has a fixed point  $p_\ell=(x^{(N)},y(\omega_\ell^\star)), N=1,...,4$ and $\ell \in \NN$, such that  $D\mathcal{F}_{\mu}  (p_\ell)$ has a double unit eigenvalue but is not the identity. 

\medbreak
\begin{table}[htb]
\begin{center}
\begin{tabular}{|c|c|c|}  \hline 
  &&\\
Coefficient & \quad  $\dpt x=\frac{\pi}{2}$ \quad  \qquad  &\qquad   $\dpt x=\frac{3\pi}{2}$ \qquad \quad  \\
  &&\\
\hline \hline

&& \\
$a_{20}(\nu)$ &$\dpt  \frac{C K \omega \lambda}{(A-\lambda)^2}<0$ & $\dpt  -\frac{C K \omega \lambda}{(A+\lambda)^2}>0$ \\
&& \\
 \hline
\hline && \\
$b_{11}(\nu)$ &$\dpt -(A-\lambda)^{\delta-2} \lambda \delta (1-\delta)>0$ & $\dpt (A+\lambda)^{\delta-2} \lambda \delta (1-\delta)<0$ \\
&& \\
 \hline
  \hline
  &&\\  
$b_{20}(\nu)$ &$\dpt -(A-\lambda)^{\delta-2} \lambda \delta (1-\delta)>0$ & $\dpt (A+\lambda)^{\delta-2} \lambda \delta (1-\delta)<0$ \\
&& \\ 
 \hline
\end{tabular}
\end{center}
\bigskip
\caption{\small Leading coefficients $a_{20}(\nu)$, $b_{11}(\nu)$ and $b_{20}(\nu)$ of \eqref{Taylor1} used to check the nonlinear conditions for the discrete-time Bogdanov-Takens. Note that $b_{11}(\nu)= b_{20}(\nu)$, for $\nu= (A, \lambda_0, \omega)$ near $ (G_\ell(\omega_\ell^\star) \mp \lambda_0, \lambda_0, \omega_\ell^\star)$. }
\end{table} 
 For $\lambda=\lambda_0>0$ fixed, by composing the translation $(x_\ell, y_\ell) \mapsto (0,0)$ with the following (local) change of coordinates:
$$
(x,y) \mapsto (x, \, C\, y)\qquad \text{where} \qquad C= \frac{-2K  \ell \pi (\delta-1)}{\, \delta^\frac{\delta}{1-\delta}\, \ln \delta }<0,
$$
for $\nu= (A, \lambda_0, \omega)$ near $ (G_\ell(\omega_\ell^\star) \mp \lambda_0, \lambda_0, \omega_\ell^\star)\in \mathcal{V}$, the map $D \mathcal{F}_{\nu} (0,0) $ has the form:
\bigbreak
\begin{equation}
\label{Taylor1}
D \mathcal{F}_{\nu} (x,y) = \left(\begin{array}{cc}
1 & 1 \\
0 & 1
\end{array}\right) 
\left(\begin{array}{c}
x \\
y
\end{array}\right)+
 \left(\begin{array}{c}
a(x,y; \nu) \\
b(x,y; \nu)  
\end{array}\right) +
\mathcal{O}(\| (x,y)\|^3), 
\end{equation}
\bigbreak
\noindent
where the polynomial expansions of order $2$ of $a$ and $b$ may be written as:
$$
a(x,y; \nu) = a_{00}(\nu) + a_{10}(\nu) x+a_{01}(\nu) y+\frac{1}{2}  a_{20}(\nu) x^2+ a_{11}(\nu) xy+\frac{1}{2}  a_{02}(\nu) y^2
$$
and
$$
b(x,y; \nu) = b_{00}(\nu) + b_{10}(\nu) x+b_{01}(\nu) y+\frac{1}{2}  b_{20}(\nu) x^2+ b_{11}(\nu) xy+\frac{1}{2}  b_{02}(\nu) y^2
$$
with
$$
a_{00}(0,0)= a_{10}(0,0)= a_{01}(0,0)= b_{00}(0,0)= b_{10}(0,0)= b_{01}(0,0)=0.
$$

The leading coefficients of $a$ and $b$ that will be used in the sequel are listed in Table 1. We concentrate our attention on the fixed point associated to $x=3\pi/2$; the other is similar. 
\begin{figure}[h]
\begin{center}
\includegraphics[height=9.0cm]{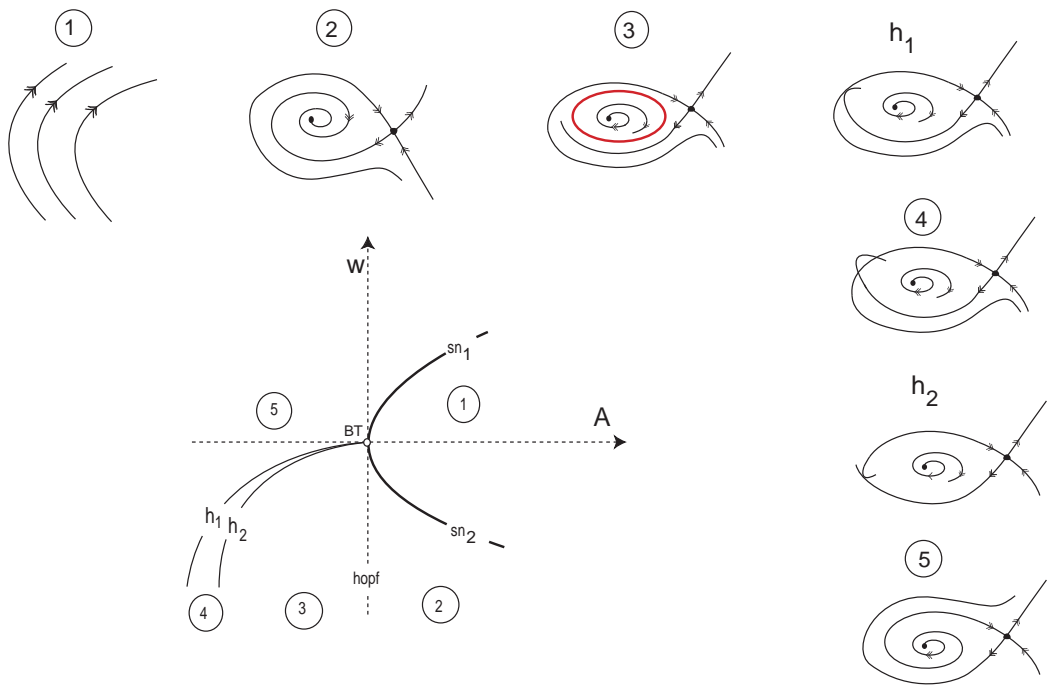}
\end{center}
\caption{\small Dynamics for the discrete-time Bogdanov-Takens bifurcation in  $(A, \omega)$, after a smooth change of coordinates. $\mathbf{sn}_1$ and $\mathbf{sn}_2$: Saddle-node bifurcations; \textbf{hopf}: Hopf bifurcation; $\mathbf{h}_1$ and $\mathbf{h}_2$: homoclinic tangencies associated to a dissipative fixed point. \textbf{1}: no recurrent dynamics; \textbf{2}: the unstable manifold of the saddle is connected with the stable manifold of the focus; \textbf{3}: saddle and a stable periodic orbit; \textbf{4}: horseshoe dynamics; \textbf{5}: the unstable manifold of the focus intersects the stable manifold of the saddle.   }
\label{BTdiscr2}
\end{figure}
By  Proposition 3.1 of Yagasaki \cite{Yagasaki}, since $$b_{20}(\nu)=  (A+\lambda)^{\delta-2} \lambda \delta (1-\delta)<0 \qquad \text{for} \qquad \delta>1,$$ and $$a_{20}(\nu)+b_{11}(\nu)-b_{20}(\nu)= -\frac{C\, K \omega \lambda}{(A+\lambda)^2}=a_{20} > 0,$$ then there exists a bifurcation point of codimension 2 at $(x_N,y(\omega^\star_\ell))$ with $N=1,..., 4$ and $\ell \in \NN$ such that,  nearby  (see Figure \ref{BTdiscr2}): \\
\begin{enumerate}
\item there exist two curves associated to saddle-node bifurcation ($\textbf{sn}_1$ and $\textbf{sn}_2$); \\
\item  there exists one curve associated to a Hopf bifurcation at the stable focus born at the saddle-node bifurcation (\textbf{hopf}); \\
\item there exists a region with a Lyapunov \emph{stable} invariant circle created at the Hopf bifurcation, since 
\begin{equation}
\label{stable circle}
b_{20}(a_{20}+b_{11}-b_{20})<0 
\end{equation}
(all coefficients are computed at $ (G_\ell(\omega_\ell^\star) \mp \lambda_0, \lambda_0, \omega_\ell^\star)$); \\
\item  there exist two curves, $\textbf{h}_1$ and $\textbf{h}_2$, associated to a homoclinic bifurcation where the stable and unstable manifolds of the saddle point born at (1) touch tangencially. The distance between the two homoclinic bifurcation curves is exponentially small with respect to  $\sqrt{\|\nu - ( G_\ell(\omega_\ell^\star) \mp \lambda_0, \omega_\ell^\star)\|)}$;\\
\item the invariant manifolds of a dissipative saddle intersect transversely inside the parameter region between the curves $\textbf{h}_1$ and $\textbf{h}_2$ and do not intersect outside it.
\end{enumerate}

\begin{remark}
\label{tangency1}
In Proposition 3.1 of \cite{Yagasaki}, there exists an extra condition: $\det D_\mu \nu(0)\neq 0$. This inequality serves to describe (in some system of coordinates) the explicit expression for the bifurcating curves, which is used to conclude that the \textbf{hopf} bifurcation curve is tangent to the saddle-node bifurcation curves $\textbf{sn}_1$ and $\textbf{sn}_2$, at the bifurcation point. 
\end{remark}

For $\ell \in \NN$, denote by $\mathbf{BT}_\ell^1$ and $\mathbf{BT}_\ell^2$ the two discrete-time Bogdanov-Takens bifurcation in the bifurcation parameter $(A, \lambda)$ such that $A>\lambda=\lambda_0$ which occur for $\omega= \omega^\star_\ell$:
\begin{equation}
\label{notation BT}
\mathbf{BT}_\ell^1  \,  \mapsto A=G_\ell(\omega_\ell^\star)-\lambda, \qquad \mathbf{BT}_\ell^2 \,   \mapsto A=G_\ell(\omega_\ell^\star)+\lambda.
\end{equation}

\subsection{Proof of Corollary \ref{Corol1}}
\label{proof_corol1}
This is a direct corollary of Theorem \ref{thm:0}. For $\lambda=\lambda_0>0$  and $\ell \in \NN$ fixed, there exist two points of Bogdanov-Takens bifurcation for the  map $\mathcal{F}_\mu$ at $p_\ell$: $\mathbf{BT}_\ell^1$  and $\mathbf{BT}_\ell^2$ (see \eqref{notation BT}). As depicted in Figure \ref{arnold_tongue4}, varying smoothly $\lambda \gtrsim 0$ around each Bogdanov-Takens bifurcation: 
\medbreak
\begin{enumerate}
\renewcommand{\theenumi}{(\alph{enumi})}
\renewcommand{\labelenumi}{{\theenumi}}
\item\label{saddlenodes} there exist two surfaces of saddle-node bifurcations ($\textbf{SN}_\ell^1$ and $\textbf{SN}_\ell^2$); \\
\item\label{Hopf} there exists a surface of Hopf bifurcations ($\textbf{Hopf}_\ell$); \\
\item\label{homoclinic} there exist two surfaces of  homoclinic tangencies ($\textbf{H}_\ell^1$ and $\textbf{H}_\ell^2$). \\
\end{enumerate}

The two surfaces \ref{homoclinic}  correspond to bifurcations at which the stable and unstable manifolds of a dissipative saddle point are tangent.
In the region between these surfaces there is a transverse intersection of the stable and the unstable manifolds of a saddle.  This configuration implies that the dynamics of $\mathcal{F}_\mu$ is equivalent to Smale's horseshoe. 
Last assertion of Corollary \ref{Corol1} follows from Remark \ref{tangency1}.


\begin{remark}
We cannot exclude the possibility that the two surfaces $\textbf{H}^1_\ell$ and  $\textbf{H}^2_\ell$, $\ell \in \NN$,  coincide, although it would be a highly non-generic behaviour.
\end{remark}

\subsection{Proof of Corollary \ref{main_thC}}
\label{proof_corol3}
The existence of $\textbf{H}^1_\ell$ and  $\textbf{H}^2_\ell$, $\ell \in \NN$, shows that there are surfaces in the space of parameters  for which the map $\mathcal{F}_{\mu}$ has a quadratic (generic) homoclinic tangency associated to a dissipative periodic point. Using \cite{MV93}, there exists a positive measure set  $\Delta$ of parameter values, so that for every $\mu\in \Delta\subset \mathcal{V}$, $\mathcal{F}_{ \mu}$ admits a strange attractor  of H\'enon-type with an ergodic SRB measure.  The existence of historic behaviour is a combination of the latter tangencies and  Theorem A of  Kiriki and Soma \cite{KS}.  

\subsection{Proof of Corollary \ref{main_thB}}
\label{proof_corol2}
The first part of the corollary is a straightforward consequence of  the Hopf bifurcation surface of Corollary \ref{Corol1}, from which a stable torus emerge (see \eqref{stable circle}).   
The proof for the second part is a simple inspection of  Theorem B of \cite{Rodrigues2020} (see also references therein).  For the sake of completeness, we list the main steps of the proof:\\
\begin{enumerate}
\item for each $\lambda=\lambda_0>0$, we write explicitly the normal form for the  family of Hopf bifurcation, which creates an attracting invariant circle; \\ 
\item perturb (if necessary) the truncated normal form in order to obtain an irrational rotation on the circle, say $H_1$;\\
\item perturb $H_1$, using Denjoy procedure \cite{Denjoy}, in order to obtain contracting wandering domains. The resulting map is $C^1$-close to $\mathcal{F}_\mu$, $\mu \in \mathcal{V}\cap \textbf{Hopf}_\ell$, $\ell \in \NN$.
\end{enumerate}

\section{Dissecting a resonance wedge:  putting  all together}
\label{dissection}
Theorem \ref{thm:0} may be seen as a ``local'' theorem. A new problem arises: \emph{how the surfaces of bifurcations of Corollary \ref{Corol1} are globally organised?} This section provides a partial answer to this question. We plot the graphs of the maps that defines the Hopf, the \emph{transitions node $\leftrightarrow$ focus} 
and the period-doubling bifurcation, as function of the parameters $\mu=(A, \lambda, \omega)\in \mathcal{V}$. These bifurcations arise in a form consistent with Corollary \ref{Corol1}. 
\subsection{Necessary conditions for bifurcations}
The Hopf surfaces are particularly significant, as they separate the resonance regions into parts with an attracting periodic orbit and
parts with none. 
To simplify the notation, denote by $\textbf{det}$ and $\textbf{trace}$ the determinant and the trace of $D\mathcal{F}_{\mu} (x, y)$ (see \eqref{matrix1}), respectively. \\ 
\begin{eqnarray*}
\mathbf{det}:= \det D\mathcal{F}_{\mu}  (x, y)&=&    \delta (y+A+\lambda \sin x)^{\delta-1} \\ \\
\textbf{trace}:=\text{trace}  D\mathcal{F}_{\mu }  (x, y)&=&  1 -  \frac{K \omega \lambda \cos x}{y+A+\lambda \sin x } + \delta (y+A+\lambda \sin x)^{\delta-1}. \\ 
\end{eqnarray*}
Note that for a $2\times 2$--real matrix  $D\mathcal{F}_{\mu} (x, y)$, its eigenvalues are the roots of the polynomial in $t$ given by $$P(t)=t^2 - \textbf{trace} \, t+\textbf{det}, \qquad \text{say} \qquad t=\frac{\textbf{trace}\pm \sqrt{\textbf{trace}^2-4\textbf{det}}}{2}.
$$

Up to nonlinear conditions, a Hopf bifurcation (for a map) occurs when the norm of the complex (conjugate) eigenvalues crosses the unit circle. This happens when 
$$
  \textbf{det}=1 \qquad \text{and} \qquad  \textbf{trace} \in \,\,   ]-2, 2[. \\
$$
\bigbreak
We know that $G_\ell( \omega)=\varphi(x)$ because $(x,y)$ is a $(1,\ell)$--fixed point of $\mathcal{F}_{\mu} $ -- see \eqref{fixed_points}. This means that 
\begin{eqnarray*}
&&\exp \left(\frac{-2\ell \pi}{K \omega}\right) - \exp \left(\frac{-2\ell \delta \pi}{K \omega}\right) = A + \lambda \sin x  \\
&\Leftrightarrow & G_\ell( \omega) - A = \lambda \sin x. \\
\end{eqnarray*}

Since $\lambda\cos x = \pm \sqrt{\lambda^2 - \lambda^2\sin x} =\pm \sqrt{\lambda^2-(G_\ell(\omega) - A)^2 } $, we may write:

\begin{eqnarray*}
\textbf{trace} &=&  1 - \frac{K \omega \lambda \cos x}{y+A+\lambda \sin x } + \delta (y+A+\lambda \sin x)^{\delta-1}  \\ \\
&=& \dpt1 \mp  \frac{K \omega \sqrt{\lambda^2-(G_\ell( \omega) - A)^2 } }{\exp \left(\frac{-2\ell \pi}{K \omega}\right) } + \delta \exp \left(\frac{-2\ell (\delta -1)\pi}{K \omega}\right) \\  
\textbf{det}   &=&    \delta (y+A+\lambda \sin x)^{\delta-1}  = \delta \, {\exp \left(\frac{-2\ell (\delta-1) \pi}{K \omega}\right) }.
\end{eqnarray*}


\begin{figure}[h]
\begin{center}
\includegraphics[height=4.5cm]{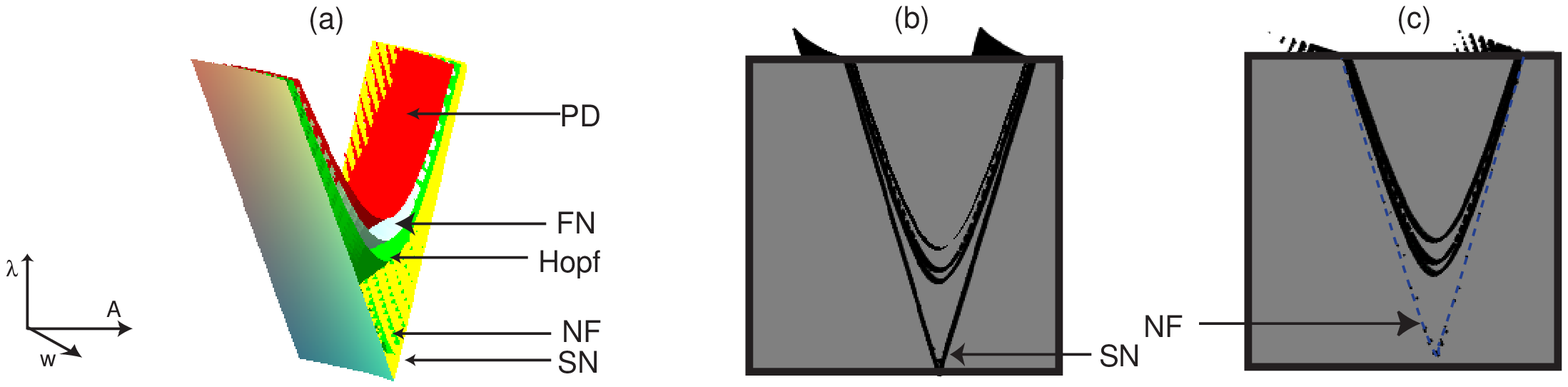}
\end{center}
\caption{\small Graphs of $ \textbf{det}=1$ and $\textbf{trace} \in \, \, ]-2, 2[$ ($\mathbf{Hopf}$), $ {\textbf{trace}^2 -4\,\,  \textbf{det}}=0$ ($\mathbf{NF}$ and $\mathbf{FN}$), $G_\ell (\omega)=A\pm \lambda$ ($\mathbf{SN}$) and one of the roots of $P(t)$ equals to $-1$ ($\mathbf{PD}$), with $\ell=K=1$, $\delta =3$, numerically plotted using \emph{Maple}, $A\in [0; 0.5]$, $\lambda \in [0; 0.1]$, $\omega \in [0.5, 10]$.  In (c)  the $\mathbf{SN}$ surfaces have not been plotted. The $\mathbf{SN}$ and $\mathbf{NF}$ surfaces are almost indistinguishable  in the gray section ($\omega=10$).}
\label{BT11a}
\end{figure}

In Figure \ref{BT11a}, for $\ell=K=2$, $\delta =3$, we have plotted  of the following surfaces, in the 3-parameter space $\mathcal{V}$: \\
\begin{enumerate}
\item \textbf{SN}: the saddle-nodes bifurcations corresponding to $G_\ell (\omega)=A\pm \lambda$; \\
\item \textbf{Hopf}: the Hopf bifurcations corresponding to $ \textbf{trace}\in\, \,  ]-2,2[$ and $\textbf{det} =1$; \\
\item \textbf{NF}/\textbf{FN}: the transitions node $\leftrightarrow$ focus corresponding to $ {\textbf{trace}^2 -4\,\,  \textbf{det}}=0$; \\
\item \textbf{PD}: the period-doubling bifurcation corresponding to the case where one root of $P(t)$ is $-1$. \\

\end{enumerate}

 \begin{figure}[h]
\begin{center}
\includegraphics[height=14.0cm]{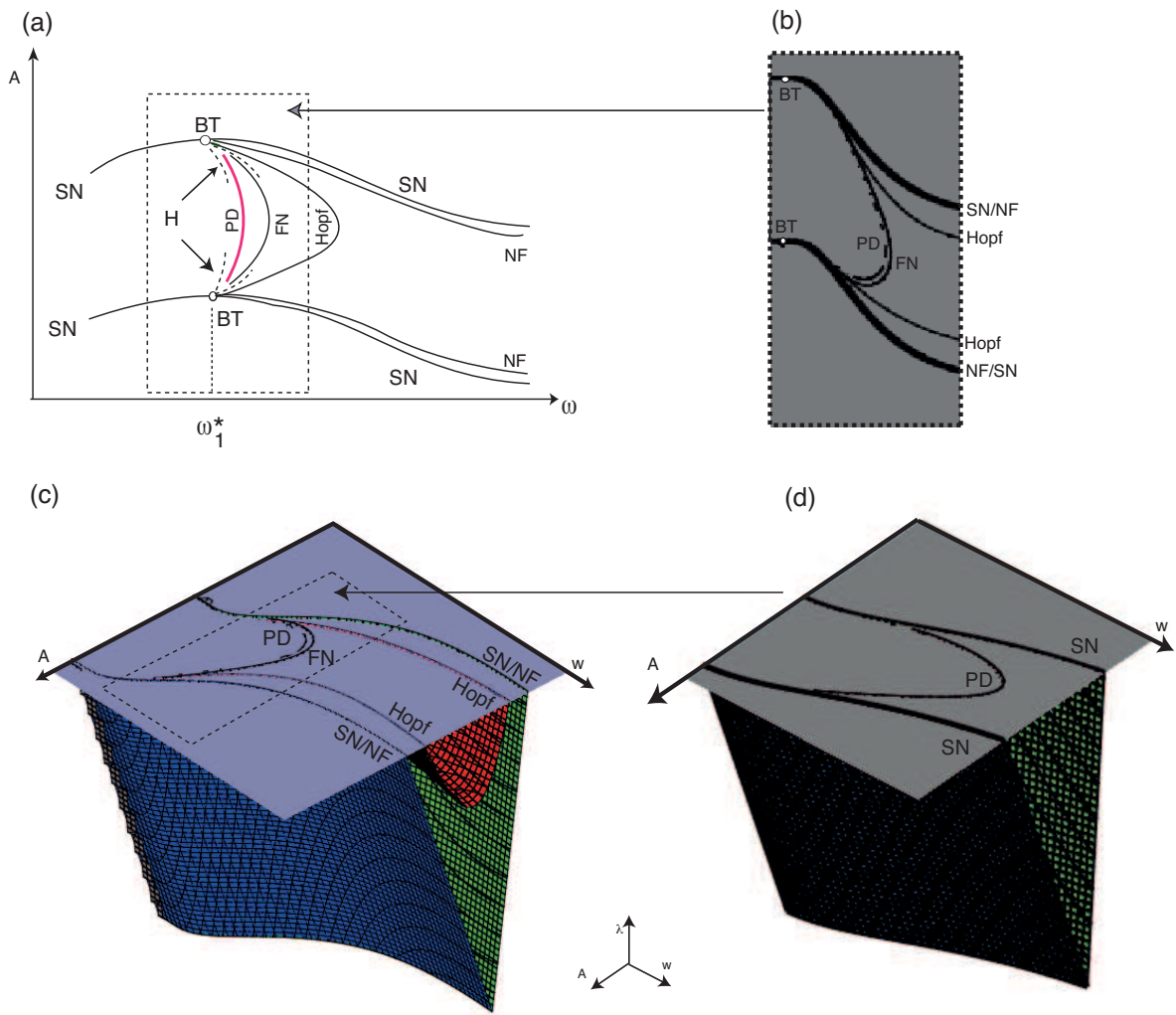}
\end{center}
\caption{\small (a) Plausible  bifurcation diagram in the plane $(\omega, A)$,  for $\lambda=\lambda_0>0$ and $\ell =K=1$. (a): theoretical scheme; (b) numerical scheme using \emph{Maple} with $\ell=K=1$, $\delta =3$, $A\in [0; 0.5]$, $\lambda =0.1$, $\omega \in [2, 10]$. (c) Approximate bifurcation diagram in the space $( A, \lambda, \omega)$,  for $\ell =K=1$, $\delta =3$, $A\in [0; 0.5]$, $\lambda \in [0; 0.1]$, $\omega \in [1, 30]$. (d) Approximate bifurcation diagram in the space $( A, \lambda, \omega)$,  for $\lambda=0.1$ and $\ell =K=1$, $\delta =3$, $A\in [0.1; 0.4]$, $\lambda \in [0; 0.1]$, $\omega \in [2, 10]$.  The gray plane corresponds to $\lambda=0.1$. \textbf{Bifurcations: } $\mathbf{BT} $:  Bogdanov-Takens, $\mathbf{SN}$: saddle-node, $\mathbf{H}$: homoclinic tangencies, \textbf{FN/NF}: transitions focus $\leftrightarrow$ node,  $\mathbf{Hopf}$: Hopf,  $\mathbf{PD}$: period-doubling. In (d), just  the surfaces $\mathbf{SN}$ and $\mathbf{PD}$ have been plotted. }
\label{arnold_tongue4}
\end{figure}

 Numerical plots suggest that the surface $\mathbf{Hopf}_\ell$ connects both ``local'' Hopf surfaces that appear  near  $ \mathbf{BT}_\ell^1$ and $ \mathbf{BT}_\ell^2$ (see Corollary \ref{Corol1} and   Figure \ref{arnold_tongue4}(a)). In the plane defined by $\omega=10$ of Figure \ref{BT11a}, the curves corresponding to $\mathbf{NF}$ and $\mathbf{SN}$ are very close; in (b) of Figure \ref{BT11a}, we cannot distinguish them.  
\begin{figure}[h]
\begin{center}
\includegraphics[height=6.0cm]{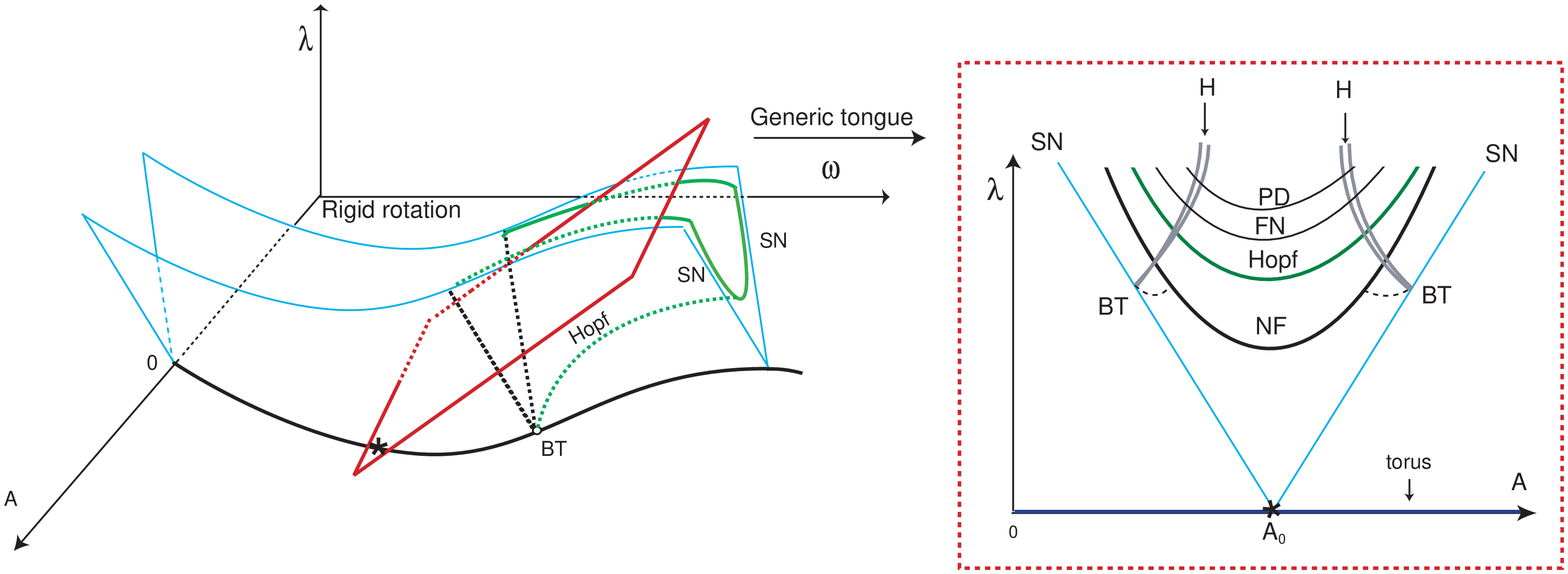} 
\end{center}
\caption{\small Schematic representation of the bifurcations of fixed points of \eqref{fixed_points}, giving rise to  a \emph{resonance wedge} (left) and an  \emph{Arnold tongue} (right) for $A>\lambda\gtrsim 0$. \textbf{Bifurcations: } $\mathbf{BT} $:  Bogdanov-Takens, $\mathbf{SN}$: saddle-node, $\mathbf{H}$: homoclinic tangencies, \textbf{FN/NF}: transitions focus $\leftrightarrow$ node,  $\mathbf{Hopf}$: Hopf,  $\mathbf{PD}$: perio d-doubling.  This figure is distorted and nonlinearly scaled to enable some of the regions to be distinguished.  }
\label{BT11}
\end{figure}

\begin{remark}
On  the $\mathbf{PD}$ surface, one of the multipliers is equal to $-1$ and, in the $\mathbf{BT}$ curve,  both multipliers are equal to $+1$. Therefore, the loci associated to these two bifurcations (for the same fixed point) should never meet.  However, as suggested by  Figure \ref{arnold_tongue4}, restricted to the plane defined by $\lambda=0.1$,   the $\mathbf{PD}$ curve ``terminates'' very close to the $\mathbf{BT}$ bifurcation points. We conjecture that it ``terminates'' within the region limited by  $\textbf{h}_1$ and $\textbf{h}_2$ (homoclinics associated to the discrete $\mathbf{BT}$ bifurcation) due to horseshoe formation/destruction.  
\end{remark}

 \begin{remark}
The theory  of the previous section has been performed for $(1,\ell)$--fixed points of $\mathcal{F}_{\mu}$. However, all these phenomena can also be observed for wedges associated to other rotation number; of course, the analytic expressions for the bifurcation surfaces are different. 
\end{remark}

\begin{figure}[h] 
\begin{center}
\includegraphics[height=12.0cm]{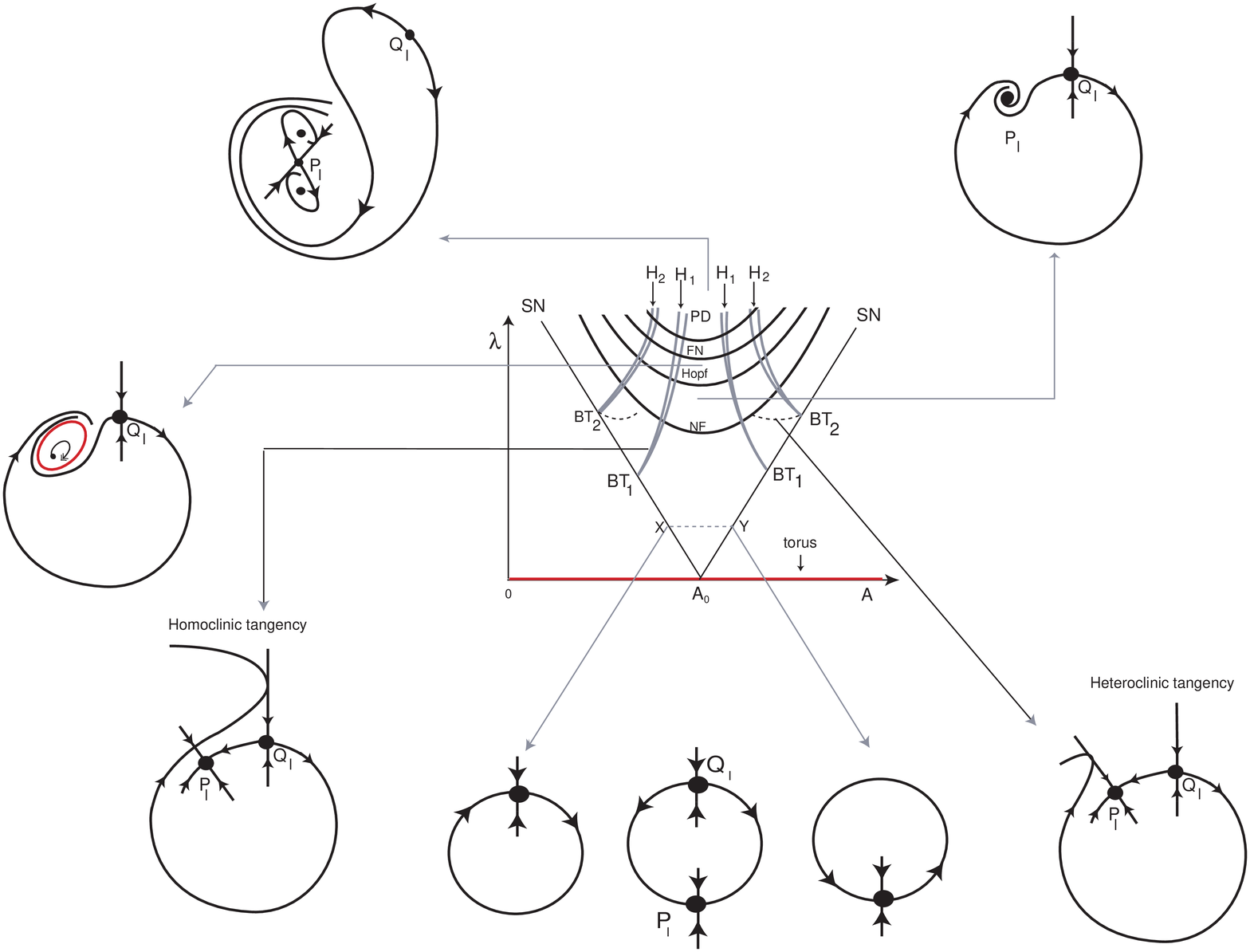} 
\end{center}
\caption{\small  Plausible schematic representation of the  \emph{Arnold tongue} of Figure \ref{BT11}, emphasising the dynamics of $\mathcal{F}_\mu$. \textbf{Bifurcations: } $\mathbf{BT}_1, \mathbf{BT}_2 $:  Bogdanov-Takens, $\mathbf{SN}$: saddle-node, $\mathbf{H}_1, \mathbf{H}_2$: homoclinic tangencies, \textbf{FN/NF}: transitions focus $\leftrightarrow$ node,  $\mathbf{Hopf}$: Hopf,  $\mathbf{PD}$: period-doubling.  The line defined by $\lambda=0$ corresponds to an attracting torus. Along the horizontal path $[XY]$, trajectories change the way of rotation around the torus. }
\label{Arnold_tongue1a}
\end{figure}

\bigbreak

\subsection{Summarizing movie}
\label{plausible}

In this subsection, we give a heuristic discussion of what is going on within an \emph{Arnold wedge}. We compare our results with those found in previous works by other authors;  all results agree well with the theoretical information of  \cite{Ostlund, Shilnikov_tutorial}.

For $\varepsilon>0$ small, the choice of parameters in Section \ref{s:setting}  allows us to build  the bifurcation diagram of Figure \ref{BT11}  in  $\mathcal{V}$ (see \eqref{paramater_set}). 
Dynamical bifurcation surfaces in \emph{resonance wedge} may be projected into a generic plane, giving rise to what the literature calls an \emph{Arnold tongue}. This projection is sketched on the right side of Figure  \ref{BT11} and Figure \ref{Arnold_tongue1a}.   All resonant wedges have the origin as a  common point.  

For $A> \lambda \geq 0$ fixed, if $\omega$ is sufficiently small, the flow of \eqref{general2.1} exhibits a 2-dimensional non-contractible torus which is globally attracting and normally hyperbolic. The dynamics of $\mathcal{F}_{\mu}$ is governed by the dynamics of a circle map. There is a positive measure set $\Delta \subset \mathcal{V}$ so that the rotation number of $\mathcal{F}_{\mu}$ is irrational  if and only if $\mu \in \Delta$ \cite{Herman, Herman79}.

 Within $\mathcal{V}$, for each $\ell\in \NN$, we may define  a resonance wedge, denoted by $\mathcal{T}_\ell$, limited by the surfaces  $\textbf{SN}: \, A=G_\ell(\omega)\pm \lambda$ adjoining the graph of  $$A= G_\ell(\omega), \qquad \lambda=0.$$ Parameters within this wedge correspond to first return maps with at least a pair of fixed points: one of the fixed points is  a saddle (say $Q_\ell$); the other point is a sink (say $P_\ell$).  As suggested in Figure \ref{Arnold_tongue1a}, we suppose the existence of just one pair of fixed points for the following analysis, both with the same \emph{rotation number}.

  The borders of   $\mathcal{T}_\ell$ are the bifurcation surfaces $\textbf{SN}:=\textbf{SN}^1_\ell\cup \textbf{SN}^2_\ell$ at which the fixed points $Q_\ell$ and $P_\ell$ merge to a saddle-node.   These surfaces   might touch the corresponding surfaces of other wedge, meaning that there are parameter values for which  periodic points of periods $m$ and $ \ell$ coexist, $\ell, m\in \NN$.  
   The surface $\textbf{Hopf}$  seems to connect both Hopf surfaces that appear  near  $ \mathbf{BT}_\ell^1$ and $ \mathbf{BT}_\ell^2$ given by Corollary \ref{Corol1}, from where a stable 2-torus emerges. 
   This torus is contractible because it does not envelope the cylinder $\overline{\Out(O_2)}$; the new tori might  coexist (for different values of $\ell \in \NN$) and are not diffeomorphic to the original torus that exists for $\lambda=0$ and $A>0$.

In the bifurcation plane $(A, \lambda)$,  the  \textbf{Hopf} surface is  above the set \textbf{NF} where the eigenvalues of the sink $P_\ell$ become complex. At the period-doubling bifurcation surface \textbf{PD}, one multiplier becomes equal to $-1$.  At this stage, the closed curve resulting from the intersection of the torus with a global cross section,  is no longer homeomorphic to a circle. Furthermore, the torus is no longer smooth as the unstable manifold of the saddle $Q_\ell$ winds around the focus infinitely many times (see Figure \ref{Arnold_tongue1a}). Between the curves \textbf{Hopf} and  \textbf{PD}, the eigenvalues of $P_\ell$ become real again, along a surface \textbf{FN}. These bifurcations have been  discussed in \cite{Ostlund, WY} where the authors relate the dynamics of an Arnold tongue with maps on the circle.     

Along the bifurcation surfaces $\textbf{H}^1_\ell$ and $\textbf{H}^2_\ell$ described by Corollary \ref{Corol1}, one observes a homoclinic contact of  the components $W^s(Q_\ell)$ and $W^u(Q_\ell)$, where $Q_\ell$ is a dissipative saddle for $\mathcal{F}_\mu$. There are small regions (in terms of measure)  inside the resonance wedges where chaotic trajectories are  observable: they correspond to strange attractors of H\'enon type \cite{Rodrigues2019}.
Other stable points of large period exist in the region above the surfaces $\textbf{H}^1_\ell$ and $\textbf{H}^2_\ell$, as a consequence of Newhouse phenomena \cite{Newhouse79}.
Numerics in \cite{BST98} also suggest the existence of \emph{bistability} for open regions of the parameter space: coexistence of a stable periodic solution and an attracting torus.

\section{An example}
\label{s:example}
Our study was initially motivated by the following example introduced in  \cite{Aguiar_tese} and explored in \cite{CastroR2019}. Some preliminaries about symmetries of a vector field may be found in Appendix \ref{app: symmetry}.
 For $\tau_1, \tau_2 \in \,[0,1]$, our object of study is the two-parameter family of vector fields on $\RR^{4}$ $$x=(x_1,x_2,x_3,x_4)\in\RR^4 \quad \mapsto \quad  f_{(\tau_1, \, \tau_2)}(x)$$
defined for each $x=(x_1,x_2,x_3,x_4)\in \RR^4$ by
\begin{equation}\label{example}
\left\{
\begin{array}{l}
\dot{x}_{1}=x_{1}(1-r^2)-\textcolor{red}\omega x_2-\alpha x_1x_4+\beta x_1x_4^2 +\textcolor{blue}{\tau_2}{ x_1x_3x_4} \\ 
\dot{x}_{2}=x_{2}(1-r^2)+\textcolor{red}{\omega}x_1-\alpha x_2x_4 +\beta x_2x_4^2 \\ 
\dot{x}_{3}=x_{3}(1-r^2)+\alpha x_3x_4+\beta x_3x_4^2+\textcolor{magenta}{\tau_1}{ x_4^3} -\textcolor{blue}{\tau_2}{  x_1^2x_4}\\ 
\dot{x}_{4}=x_{4}(1-r^2)-\alpha (x_3^2-x_1^2-x_2^2)-\beta x_4(x_1^2+x_2^2+x_3^2)-\textcolor{magenta}{\tau_1}{ x_3x_4^2}\\
\end{array}
\right.
\end{equation}
where $\dpt \dot{x}_i=\frac{\partial x_i}{\partial t},$  $r^2=x_{1}^{2}+x_{2}^{2}+x_{3}^{2}+x_{4}^{2}$, and

$$
\omega>0, \qquad \beta <0<\alpha, \qquad \beta^2<8 \alpha^2 \qquad \text{and} \qquad |\beta|<|\alpha|.
$$
The vector field $f_{(0,0)}$ is equivariant under the action of the compact Lie group $\mathbb{SO}(2)(\gamma_\psi)\oplus \ZZ_2(\gamma_2)$, where $\mathbb{SO}(2)(\gamma_\psi)$ and $\ZZ_2(\gamma_2)$ act on $\RR^4$ as
$$\gamma_\psi(x_1, x_2,x_3,x_4)=(x_1\cos \psi -x_2 \sin \psi, x_1\sin \psi +x_2\cos \psi, x_3,x_4), \quad \psi \in [0, 2\pi] $$
given by a phase shift $\theta \mapsto \theta+ \psi$ in the first two coordinates, and 
$$ \gamma_2(x_1, x_2,x_3,x_4)=(x_1, x_2,-x_3,x_4).$$
By construction, $\tau_1$ is the controlling parameter of the $\ZZ_2(\gamma_2)-$symmetry breaking and  $\tau_2$ controls the $\mathbb{SO}(2)(\gamma_\psi)-$symmetry breaking but keeping the $\mathbb{SO}(2)(\gamma_\pi)$--symmetry, where
$$
\gamma_\pi (x_1, x_2, x_3, x_4)=(-x_1, -x_2, x_3, x_4). $$
 When restricted to the sphere $\EU^3$, for every $\tau_1, \tau_2 \in [0,1]$, the flow of $f_{(\tau_1, \tau_2)}$ has 
 two equilibria 
$$O_1 =(0,0,0,+1) \quad \quad \text{and} \quad \quad O_2 = (0,0,0,-1), $$
which are hyperbolic saddle-foci. 
The linearization of $f_{(0,0)}$ at $O_1$ and $O_2$ has eigenvalues
$$ -(\alpha-\beta) \pm \omega i, \,\,  \alpha+\beta \qquad \text{and} \qquad (\alpha + \beta)\pm \omega i, \,\,  -(\alpha-\beta)$$
respectively. The 1D-connections are given by: 
\begin{eqnarray*}
\overline{W^u(O_1)} \cap \EU^3&=& \overline{W^s(O_2)} \cap \EU^3=\text{Fix}(\mathbb{SO}(2)(\gamma_\psi))\cap \EU^3\\ 
&=& \{(x_1,x_2,x_3,x_4): x_1=x_2=0, x_3^2 + x_4^2 = 1\}
\end{eqnarray*}
and the 2D-connection is contained in
\begin{eqnarray*}
\overline{W^u(O_2)} \cap \EU^3&=& \overline{W^s(O_1)} \cap \EU^3=\text{Fix}(\ZZ_2(\gamma_2))\cap \EU^3 \\
&=& \{(x_1,x_2,x_3,x_4): x_1^2 + x_2^2 + x_4^2 = 1, x_3=0\} .
\end{eqnarray*}

The two-dimensional invariant manifolds of $O_1$ and $O_2$ are contained in the two-sphere $\text{Fix}(\ZZ_2(\gamma_2 ))\,\cap\, \EU^3.$ It is precisely the symmetry $\ZZ_2(\gamma_2)$ that forces the two-invariant manifolds $W^u(O_2)$ and $W^s(O_1)$ to coincide. We denote by $\Gamma$ the \emph{heteroclinic network} formed by the two equilibria, the two connections $[O_1 \rightarrow O_2]$ and the sphere $[O_2 \rightarrow O_1]$.   Keeping $\tau_1=\tau_2=0$, the equilibria $O_1$ and $O_2$ have the same \emph{chirality}. Therefore:
\begin{lemma}
If $\tau_1=\tau_2=0$,  the flow of \eqref{example} satisfies \textbf{(P1)--(P5)} described in Section \ref{starting point}. 
\end{lemma}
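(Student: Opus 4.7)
The plan is to verify the five properties \textbf{(P1)}--\textbf{(P5)} one by one, exploiting the compact Lie group $\mathbb{SO}(2)(\gamma_\psi)\oplus\ZZ_2(\gamma_2)$-equivariance of $f_{(0,0)}$ to reduce the analysis on each invariant fixed-point subspace. Properties \textbf{(P1)}, \textbf{(P2)} and \textbf{(P5)} reduce to direct substitution and eigenvalue bookkeeping, whereas the geometric statements \textbf{(P3)} and \textbf{(P4)} follow after reducing the vector field to its equivariant invariant submanifolds and checking that no additional equilibria or recurrent sets obstruct the global picture.

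I would begin with \textbf{(P1)} by plugging $O_1=(0,0,0,\pm 1)$ into \eqref{example}: every monomial contains a factor $x_1,x_2$, or $x_3$, so all components vanish at both poles, and both belong to $\EU^3$. For \textbf{(P2)} I compute $Df_{(0,0)}$ at each equilibrium; the block structure forced by the $\mathbb{SO}(2)(\gamma_\psi)$-symmetry gives the eigenvalues already listed in the excerpt, and the hypotheses $\alpha>0>\beta$ with $|\beta|<|\alpha|$ yield $E_i=\alpha+\beta>0$ and $C_i=\alpha-\beta>0$, matching \textbf{(P2a)}--\textbf{(P2b)}. Property \textbf{(P5)} is then immediate:
\begin{equation*}
\frac{C_1 C_2}{E_1 E_2}=\left(\frac{\alpha-\beta}{\alpha+\beta}\right)^{2}>1,
\end{equation*}
since $\beta<0$ makes $\alpha-\beta>\alpha+\beta>0$.

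For \textbf{(P4)}, the circle $\mathrm{Fix}(\mathbb{SO}(2)(\gamma_\psi))\cap\EU^3=\{x_1=x_2=0,\;x_3^2+x_4^2=1\}$ is flow-invariant. On it, $r^2=1$ kills the radial term, and parametrising by $x_3=\sin\theta,\ x_4=\cos\theta$ reduces the system to $\dot\theta=\sin\theta\,(\alpha+\beta\cos\theta)$. The factor $\alpha+\beta\cos\theta$ never vanishes because $|\beta|<|\alpha|$, so the only zeros of $\dot\theta$ are $\theta=0,\pi$, corresponding to $O_1$ and $O_2$; each of the two open arcs is then a monotone trajectory and hence a $1D$-connection, giving the two heteroclinic trajectories $\gamma_1,\gamma_2$ required. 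Their membership in different connected components of $\EU^3\setminus\overline{W^u(O_2)}$ will follow once the $2$-sphere of \textbf{(P3)} is identified.

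For \textbf{(P3)}, the $2$-sphere $\mathrm{Fix}(\ZZ_2(\gamma_2))\cap\EU^3=\{x_3=0,\;x_1^2+x_2^2+x_4^2=1\}$ is invariant. Setting $s:=x_1^2+x_2^2=1-x_4^2$, a straightforward computation yields
\begin{equation*}
\dot x_4=s\,(\alpha-\beta x_4)=(1-x_4^2)(\alpha-\beta x_4).
\end{equation*}
The factor $\alpha-\beta x_4=\alpha+|\beta|x_4$ is strictly positive for $|x_4|\le 1$ by $|\beta|<|\alpha|$, so $x_4$ is a strict Lyapunov coordinate: $\dot x_4>0$ except at the poles. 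Thus on the invariant sphere there are no equilibria besides $O_1,O_2$, no periodic orbits, and every non-equilibrium trajectory satisfies $x_4(t)\to 1$ as $t\to+\infty$ and $x_4(t)\to -1$ as $t\to-\infty$, which together with the hyperbolicity from \textbf{(P2)} forces $\overline{W^u(O_2)}=\overline{W^s(O_1)}=S^2$, establishing \textbf{(P3)}. The main obstacle is precisely this step: the $\ZZ_2$-symmetry guarantees the invariant sphere but not that it coincides with the $2D$-connection, and the monotonicity argument via the Lyapunov coordinate $x_4$ is what rules out spurious limit sets and closes the argument; the remaining components of \textbf{(P4)} (the two arcs lying in different components of $\EU^3\setminus S^2$) follow since $S^2$ disconnects $\EU^3$ into the hemispheres $\{x_3>0\}$ and $\{x_3<0\}$, and the two arcs have $x_3=\sin\theta$ of opposite signs.
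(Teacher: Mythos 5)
Your proof is correct and follows essentially the same route the paper takes (the paper states this lemma without proof, merely listing the eigenvalues and the fixed-point subspaces $\Fix(\mathbb{SO}(2)(\gamma_\psi))\cap\EU^3$ and $\Fix(\ZZ_2(\gamma_2))\cap\EU^3$ beforehand and deferring details to the cited references); your reductions $\dot\theta=\sin\theta\,(\alpha+\beta\cos\theta)$ on the invariant circle and $\dot x_4=(1-x_4^2)(\alpha-\beta x_4)$ on the invariant sphere are exactly the computations needed to turn that sketch into a proof, and the tangency of the sphere to the unstable eigenspace of $O_2$ (which you invoke via hyperbolicity) closes \textbf{(P3)}. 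The only caveat is that for \textbf{(P5)} you establish $\delta=\bigl(\tfrac{\alpha-\beta}{\alpha+\beta}\bigr)^{2}>1$, whereas the paper's ``$\gtrsim 1$'' is meant as ``greater than and close to $1$'' and thus additionally requires $|\beta|$ small relative to $\alpha$ — an imprecision inherited from the paper's own hypotheses rather than a flaw in your argument.
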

As a consequence, for $\tau_1=\tau_2=0$, the flow of \eqref{example} exhibits an asymptotically stable heteroclinic network $\Gamma$ associated to $O_1$ and $O_2$. The parameters $\tau_1$ and $\tau_2$   play the role of $A$ and $\lambda$, respectively, of \textbf{(P7)--(P8)}, after possible rescaling.

 \begin{corollary} \cite{CastroR2019}
\label{torus_cor}
For $\tau_1>0$ and $\tau_2=0$, close to the ``ghost'' of the attracting network $\Gamma$, the flow of \eqref{example}  has an attracting invariant two-torus, which is normally hyperbolic. 
\end{corollary}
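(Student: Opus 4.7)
The plan is to identify the parameters of \eqref{example} with those of the abstract framework of Section \ref{s:setting}, so that Corollary \ref{torus_cor} becomes a special case of the analysis of the first return map \eqref{first_return_map}. The preceding Lemma already gives \textbf{(P1)}--\textbf{(P6)} at $\tau_1=\tau_2=0$, so what remains is to verify \textbf{(P7)}--\textbf{(P9)} when $\tau_1>0$ and $\tau_2=0$, with $\tau_1$ playing the role of $A$ and $\tau_2$ the role of $\lambda$. Condition \textbf{(P7)} is immediate because the one-dimensional connections lie inside the flow-invariant sphere $\text{Fix}(\mathbb{SO}(2)(\gamma_\psi))\cap \EU^3$, and the $\mathbb{SO}(2)(\gamma_\psi)$-equivariance is preserved by $\tau_2=0$. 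For \textbf{(P8b)}, a Melnikov-type computation (in the spirit of Remark \ref{simetria_remark} and Section 2.4 of \cite{LR}) shows that the leading unfolding term along the $\mathbb{Z}_2(\gamma_2)$-symmetry-breaking direction $\tau_1$ is a nonzero constant, so for small $\tau_1>0$ the manifolds $W^u(O_2)$ and $W^s(O_1)$ pull apart and do not reintersect; inserting $\tau_2=0$ into \eqref{transition21} reduces the transition $\Psi_{2\rightarrow 1}$ to a constant displacement proportional to $\tau_1$, yielding \textbf{(P9)}.

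With these ingredients in place, the first return map specializes to
\begin{equation*}
\mathcal{F}_{(\tau_1,0,\omega)}(x,y) = \Bigl(\, x - K\,\omega\ln(y+\tau_1)\pmod{2\pi}\, ,\ (y+\tau_1)^\delta\, \Bigr) + \ldots
\end{equation*}
Its second coordinate is independent of $x$ and, by Remark \ref{rem_dissipative}, the scalar map $y\mapsto (y+\tau_1)^\delta$ has a unique attracting fixed point $y_0$ satisfying $(y_0+\tau_1)^\delta=y_0$. Hence the horizontal circle $\{y=y_0\}$ is invariant and the restriction of $\mathcal{F}_{(\tau_1,0,\omega)}$ to it is the rigid rotation $x\mapsto x - K\,\omega\ln(y_0+\tau_1)$. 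The derivative \eqref{matrix1} evaluated at $(x,y_0)$ is upper triangular with eigenvalues $1$ in the tangential direction and $\delta(y_0+\tau_1)^{\delta-1}\in(0,1)$ in the normal direction, so the circle is normally attracting with contraction rate bounded away from $1$. By the Hirsch--Pugh--Shub persistence theorem the invariant circle survives the $C^3$-small higher-order terms absorbed in ``$\ldots$''. Suspending this invariant circle through the flow-box construction of Section \ref{localdyn} produces an attracting normally hyperbolic two-torus for the flow of \eqref{example} inside the neighbourhood $\mathcal{U}$ of the ghost network $\Gamma$.

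The main obstacle is the explicit Melnikov-type verification that the unfolding direction $\partial_{\tau_1} f_{(\tau_1,0)}|_{\tau_1=0}$ is genuinely transverse, in the local coordinates of Section \ref{localdyn}, to the coincidence locus of the two-dimensional invariant manifolds. This is the only step in which the specific algebraic form of \eqref{example} really enters: one has to check that the symmetry-breaking integral computed along the 2D-sphere heteroclinic connection $[O_2\to O_1]$ does not vanish, which in the rotationally symmetric setting of \eqref{example} is a direct but somewhat laborious residue-type calculation. Once this transversality is established, the identification $\tau_1 \leftrightarrow A$ is smooth and invertible for small $\tau_1>0$, completing the reduction to the abstract setting and therefore the proof.
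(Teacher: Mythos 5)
Your argument is correct and is essentially the intended one: the paper itself gives no proof of this corollary (it is imported verbatim from \cite{CastroR2019}), but the surrounding framework --- the identification $\tau_1\leftrightarrow A$, $\tau_2\leftrightarrow\lambda$, Remark \ref{rig rotation}, and the ``attracting torus'' region of Section \ref{ss: transitional} attributed to Theorem B of \cite{Rodrigues2019} --- prescribes exactly your route: set $\lambda=0$ in \eqref{first_return_map}, locate the attracting fixed point $y_0$ of $y\mapsto(y+\tau_1)^\delta$, read off normal hyperbolicity of the circle $\{y=y_0\}$ from the triangular derivative \eqref{matrix1}, invoke Hirsch--Pugh--Shub for the higher-order terms, and suspend.

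Two remarks. First, you over-engineer the verification of \textbf{(P9)}: for $\tau_2=0$ the vector field retains the full $\mathbb{SO}(2)(\gamma_\psi)$-equivariance, so the displacement between the first hits of $W^u(O_2)$ and $W^s(O_1)$ on $\Sigma$ is forced by symmetry to be independent of the angular coordinate $x$; no Melnikov integral is needed to get the form $(x,y)\mapsto(x,y+A)$, only a one-dimensional splitting computation (done in \cite{CastroR2019}) to see that the constant $A=A(\tau_1)$ is nonzero for $\tau_1>0$. Second, the map $y\mapsto(y+\tau_1)^\delta$ generically has \emph{two} fixed points when $0<\tau_1<M$ (the equation $\tau_1=y^{1/\delta}-y$ has two roots), one attracting and one repelling; your phrase ``unique attracting fixed point'' is true but should not be read as uniqueness of the fixed point, and the solvability constraint $\tau_1\leq M$ is precisely the condition $M\geq A+\lambda$ built into the parameter set \eqref{paramater_set}. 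Neither point affects the validity of your conclusion.
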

    When $\tau_1\gg \tau_2>0$, although we break the $\mathbb{SO}(2)(\gamma_\psi)$--equivariance, the $\ZZ_2(\gamma_\pi)$--symmetry is preserved. This is why the connections lying in  $x_1=x_2=0$ persist.

\begin{lemma}\cite{CastroR2019}
For $\tau_1,\tau_2 > 0$ small enough such that $\tau_1\gg \tau_2$, the flow of \eqref{example} satisfies \textbf{(P7)--(P8b)}. 
\end{lemma}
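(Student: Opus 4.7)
The plan is to treat \textbf{(P7)} and \textbf{(P8b)} separately, using the residual symmetries of \eqref{example} to pin down the 1D connections, and a Melnikov/transversality argument to control the 2D splitting.

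For \textbf{(P7)}, I would first observe that for every pair $(\tau_1,\tau_2)$ the vector field $f_{(\tau_1,\tau_2)}$ remains equivariant under $\mathbb{SO}(2)(\gamma_\pi)$, since the perturbing monomials $\tau_2 x_1x_3x_4$ in $\dot x_1$, $-\tau_2 x_1^2x_4$ in $\dot x_3$, $\tau_1 x_4^3$ in $\dot x_3$ and $-\tau_1 x_3x_4^2$ in $\dot x_4$ all respect $(x_1,x_2)\mapsto(-x_1,-x_2)$. Therefore $\mathrm{Fix}(\mathbb{SO}(2)(\gamma_\pi))\cap\EU^3=\{x_1=x_2=0,\ x_3^2+x_4^2=1\}$ is flow-invariant; this invariant circle contains $O_1$, $O_2$ and the two heteroclinic trajectories present at $\tau_1=\tau_2=0$, and the flow on the circle has exactly those two equilibria. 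Hence the 1D connections persist for all small $(\tau_1,\tau_2)$.

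For \textbf{(P8b)} the strategy is to measure the splitting of $W^u(O_2)$ and $W^s(O_1)$ along the invariant sphere $S_0=\{x_3=0\}\cap\EU^3$ which coincides with the 2D connection at $\tau_1=\tau_2=0$. Pick a cross-section $\Sigma$ transverse to $S_0$ parametrised by an angular coordinate $\theta\in\EU^1$ (inherited from the $\mathbb{SO}(2)(\gamma_\psi)$--orbits on $S_0$), and write the signed distance between the first hits of the two manifolds on $\Sigma$ as
\begin{equation*}
d(\theta;\tau_1,\tau_2)=\tau_1\,M_1(\theta)+\tau_2\,M_2(\theta)+\mathcal{O}\!\left((\tau_1+\tau_2)^2\right),
\end{equation*}
with $M_1,M_2$ given by classical Melnikov integrals along the unperturbed heteroclinic sphere. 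The point is now to use symmetry to control the shape of $M_1$ and $M_2$: at $\tau_2=0$ the family is $\mathbb{SO}(2)(\gamma_\psi)$--equivariant, so $M_1(\theta)\equiv M_1$ is constant in $\theta$; Corollary~\ref{torus_cor} says that this pure perturbation produces an attracting torus, i.e.\ the manifolds separate, so necessarily $M_1\neq 0$. The function $M_2(\theta)$ is the only remaining $\theta$-dependent piece, and it is bounded on the compact section $\Sigma$.

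With these two facts, the conclusion is a magnitude comparison: for $\tau_1,\tau_2>0$ sufficiently small with $\tau_1\gg\tau_2$ (concretely, $|\tau_1 M_1|>|\tau_2|\sup_\theta|M_2(\theta)|$ plus a margin absorbing the quadratic remainder), $d(\theta;\tau_1,\tau_2)$ has constant sign in $\theta$, so $W^u(O_2)\cap W^s(O_1)=\emptyset$ and \textbf{(P8b)} holds. The main obstacle is the explicit verification that $M_1\neq 0$; I would bypass a direct computation by invoking Corollary~\ref{torus_cor} (the existence of the attracting normally hyperbolic torus forces a non-degenerate splitting in the $\tau_1$ direction), and the boundedness of $M_2$ is automatic since the unperturbed heteroclinic sphere and the perturbation are both smooth on a compact set.
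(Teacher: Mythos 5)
The paper does not actually prove this lemma here — it is quoted from \cite{CastroR2019} — but the justification it does indicate (preservation of the $\ZZ_2(\gamma_\pi)$-symmetry forces flow-invariance of $\{x_1=x_2=0\}$, hence \textbf{(P7)}, while $\tau_1$ and $\tau_2$ play the roles of $A$ and $\lambda$ in the splitting distance $A+\lambda\Phi(x)$, so $\tau_1\gg\tau_2$ yields a sign-definite splitting and \textbf{(P8b)}) is exactly the argument you give. Your proposal is correct and follows the same route; the one soft spot is your use of Corollary~\ref{torus_cor} to get $M_1\neq 0$, since the existence of the attracting torus only guarantees that the manifolds separate, not that they separate at first order in $\tau_1$ — that quantitative lower bound is what the explicit Melnikov computation in the cited reference supplies.
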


Numerical simulations of  \eqref{example} for $\tau_1\gg \tau_2>0$  suggest the existence of regular and  chaotic behaviour in the region of transition from an attracting 2-dimensional torus to rotational horseshoes \cite{CastroR2019}. Chaotic attractors with  one positive Lyapunov exponent seem to exist, as suggested by the yellow regions  occurring in the upper part of the Arnold tongues in Figure \ref{numerics1}. The description of Section \ref{dissection} agrees quite well with the bifurcation diagram.

Hopf surfaces found in Corollary \ref{Corol1} correspond to the lower bound of the blue ``bananas'' that one observes in Figure \ref{numerics1}. This   bifurcation gives rise to a stable 2-torus (blue region) in the flow of \eqref{example}. Numerically we lose control of this stable torus, although we guess that it persists in other location of the phase space. 
Our numeric findings show that there are tiny regions of the parameter space inside the resonance regions where strange attractors may be found.


\subsection*{Technicalities on numerics of Figure \ref{numerics1} }
  The parameter plane $(\tau_1, \tau_2)$ of Figure \ref{numerics1} is scanned with a sufficiently small step along each coordinate axes. The software evaluates at each parameter value how many Lyapunov exponents along the orbit with initial condition $(0.1; 0.1; 0; -0.99)\in W^u(O_2)$, are non-negative (considered ``positive'' when greater than $  5\times 10^{-4}$ to discard uncertain positive Lyapunov exponents due to numerical precision issues). The parameter is painted according to the following rules: red for $0$, blue for $1$, yellow for $2$.  To estimate the complete Lyapunov spectra, the authors of \cite{CastroR2019} used the algorithm for differential
equations  with a Taylor series integrator.  

\begin{figure}[h]
\begin{center}
\includegraphics[height=7.9cm]{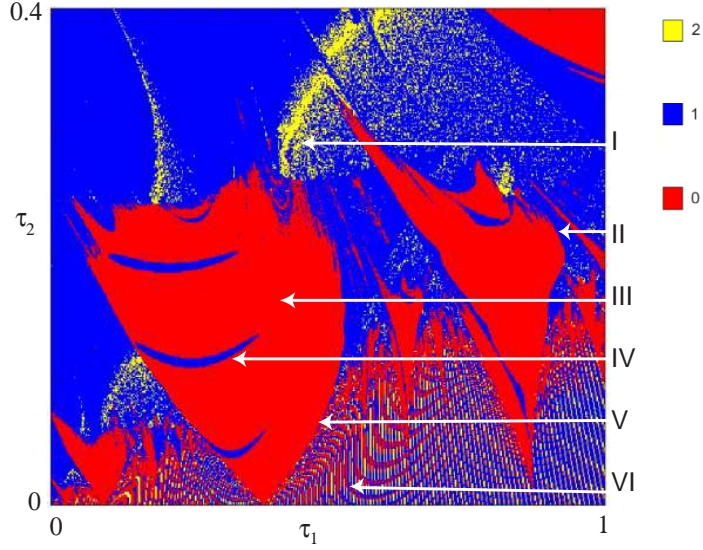}
\end{center}
\caption{\small Number of non-negative Lyapunov exponents along the orbit with initial condition $(0.1; 0.1; 0; -0.99)$   near $W^u(O_2)$ for equation \eqref{example} with $\alpha=1$, $\beta=-0.1$ and $\omega=1$ , $t\in [0,3750]$. Red for $0$; Blue for $1$; Yellow for $2$.  \textbf{I} -- {Homoclinic bifurcations; H\'enon-like strange attractors}; \textbf{II} -- {Sink}; \textbf{III}   -- {Resonant tongue (Arnold tongue)}; \textbf{IV} -- {Hopf bifurcation};  \textbf{V} -- {Saddle-node bifurcation (border of the Arnold tongue)}; \textbf{VI}  -- {Irrational torus (thin yellow region)}.  Figure  performed by L. Castro adapted from \cite{CastroR2019}.}
\label{numerics1}
\end{figure}

 \section{Discussion}
 \label{discussion}

In this article, we have constructed  a bifurcation diagram associated to a 3-parameter differential equation, whose starting point is  a weakly attracting heteroclinic network with a 2-dimensional connecting manifold, a natural configuration in symmetric systems and in some {unfoldings of the Hopf-zero singularity} \cite{BIS,  Langford, SNN95}.

We concentrate our attention in a family of vector fields $f_{\mu} \in\mathfrak{X}^3(\EU^3)$ satisfying \textbf{(P7)--(P8b)--(P9)}.  The   bifurcation diagram  of each element of the family is governed by   an \emph{Arnold wedge}, a structure through which an \emph{Arnold tongue} may be seen as a projection. This (new) heteroclinic bifurcation is different from that obtained in  \cite{Anishchenko},  in which an equilibrium produces a periodic solution which, in turn, generates a 2-torus. 

The structure of the Arnold tongue  strongly depends on $\omega$. 
This  suggested us to extend the 2-parameter ``classical'' bifurcation diagram $\left(A,  \frac{\lambda}{A}\right)$ of \cite{Rodrigues2019} to  a 3-dimen\-sional case, where $\omega$ is the additional parameter. 
Doing that,  \emph{Arnold tongues} give rise to  \emph{resonance wedges} bounded by two surfaces that correspond to saddle-node bifurcations. 
 The resonance wedge contains a sequence of curves corresponding to a discrete-time Bogdanov-Takens bifurcation, a
possibility  already anticipated in  \cite{Algaba2001, Kim}. Parameters within the wedge corresponds to maps whose periodic orbits share the same rotation number.

The structure of a \emph{resonance wedge} is consistent with the \emph{Torus-breakdown theory} \cite{AS91,  Ostlund, Peckman90, Peckman_bananas}, an essential route to understand the nature of turbulence  \cite{RT71}. When the speed of rotation $\omega>0$ is small, the flow of \eqref{general2.1} exhibits an attracting torus  consisting of either locked or quasiperiodic solutions. As $\omega$ increases, the attracting torus disintegrates into isolated periodic sinks and saddles. Increasing the magnitude of  $\omega$ further, the phase space is stretched and folded, creating rotational horseshoes, homoclinic tangencies and strange attractors of H\'enon-type  with  ergodic SRB measures.  In between,  Hopf bifurcations are present.  Our description refines   the diagrams proposed by  \cite{AP93} and  \cite{Aronson}.

For $f_{\mu} \in\mathfrak{X}^3(\EU^3)$, we may distinguish the dynamics between  \emph{heteroclinic tangle} and \emph{rank-one like attractors}; this depends on the hypotheses \textbf{(P8a)} and \textbf{(P8b)}.
In both cases, there exists complicated dynamics in $\mathcal{U}$,  but chaotic dynamics are created by two independent mechanisms. They are:

 \begin{eqnarray*}
\lambda>A \geq 0, \, \omega \in \RR^+    &\Leftrightarrow& \quad \text{\textbf{(P8a)}: $W^u(O_2)$ and $W^s(O_1)$ intersects transversely} \\ 
 &\Rightarrow& \text{the expansion induced by intersection of invariant manifolds}\\
  &\Rightarrow& \text{Smale horseshoes (heteroclinic tangles).}  
\end{eqnarray*} 

 \begin{eqnarray*}
A>\lambda \geq 0, \, \omega \in \RR^+  &\Leftrightarrow& \quad \text{\textbf{(P8b)}: $W^u(O_2)$ and $W^s(O_1)$ do not intersect} \\ 
 &\Rightarrow& \text{the invariant manifolds of the saddle-foci are pulled apart }\\
  &\Rightarrow& \text{the expansion is induced by large  $\omega$} \\
  &\Rightarrow& \text{Rotational horseshoes}.
\end{eqnarray*} 

In the first case, we conjecture the existence of a  \emph{non-uniform expansion} for a set with positive Lebesgue measure. The difficulties to prove the  conjecture are linked with the existence of infinitely many points within $W^s(O_1)$ where the first return map is not well defined. 
In the second scenario, the dynamics is governed by \emph{strange attractors} \cite{Wang2, WY}.
 A lot more needs to be done before these two types of chaos are well understood.

The analysis in this paper is not sensitive to the particular configuration given by the heteroclinic attractor $\Gamma$; the results are valid for more general weakly attracting networks with $2$-dimensional  heteroclinic connections which unfold generically from the coincidence. 
Finally, we would like to point out that all results also hold for periodically-forced differential equations, natural in the  study of \emph{seasonally forced systems}, where ``our''  parameters $A, \lambda, \omega$ may be interpreted  as (cf. \cite{LR2020}):
 \begin{eqnarray*}
\text{A} \quad &\to& \quad \text{Average of the periodic-forcing;}\\
{\lambda}\quad &\to& \quad \text{Effect (fluctuations) of the unstable manifold on a global cross section;}\\
{\omega}  \quad &\to& \quad \text{Frequency of the forcing}.
\end{eqnarray*}
By moving parameters, the invariant manifolds of invariant saddles cause destruction and fusion of attractors. The fully description of these metamorphoses is under analysis and are deferred for future work. 
 
\section*{Acknowledgements}
 The  author is grateful to Isabel Labouriau for the fruitful discussions during the research work performed in \cite{LR2020}. Special thanks to Andrey Shilnikov for pointing out the paper \cite{SNN95} on bifurcations analysis of a low-order atmospheric circulation model. The author is indebted to the two reviewers for the constructive comments, corrections and suggestions which helped to improve the readability of this manuscript.

\appendix

 \section{Glossary}
 \label{Definitions}
We record a miscellaneous collection of terms and terminology that are used throughout the text.
For $\varepsilon>0$ small, consider the 3-parameter family of $C^3$--smooth autonomous differential equations
\begin{equation}
\label{general2}
\dot{x}=f_{(A, \lambda, \omega)}(x)\qquad x\in \EU^3\subset\RR^4  \qquad A, \lambda \in [0, \varepsilon], \qquad \omega \in\RR^+.
\end{equation}
Since $\EU^3$ is a compact set without boundary, the local solutions of \eqref{general2} could be extended to $\RR$. Denote by $\varphi_{(A, \lambda, \omega)}(t,x)$, $t \in \RR$, the associated flow.

\subsection{Symmetry}
\label{app: symmetry}
Given a compact Lie group $\mathcal{G}$ of endomorphisms of $\EU^3$, we will consider 3-parameter families of vector fields $(f_{(A, \lambda, \omega)})$ under the equivariance assumption $$f_{(A, \lambda, \omega)}(\gamma x)=\gamma f_{(A, \lambda, \omega)}(x)$$ for all $x \in \EU^3$, $\gamma \in \mathcal{G}$ and $(A, \lambda, \omega )\in  [0, \varepsilon]^2\times \RR^+.$
For an isotropy subgroup $\widetilde{\mathcal{G}}< \mathcal{G}$, we write $\Fix(\widetilde{\mathcal{G}})$ for the vector subspace of points that are fixed by the elements of $\widetilde{\mathcal{G}}$. Note that, for $\mathcal{G}-$equivariant differential equations, the subspace $\Fix(\widetilde{\mathcal{G}})$ is flow-invariant.

\subsection{Attracting set}
\label{quasi1}
A subset $\Omega$ of  $ \EU^3$ for which there exists a neighborhood $U \subset  \EU^3$ satisfying $\varphi_{(A, \lambda, \omega)}(t,U)\subset U$ for all $t\geq 0$ and $$\dpt \bigcap_{t\,\in\,\RR^+}\,\varphi_{(A, \lambda, \omega)}(t,U)=\Omega$$ is called an \emph{attracting set} by the flow. This set is not necessarily connected. Its basin of attraction, denoted by $\textbf{B}(\Omega)$, is the set of points in $ \EU^3$ whose orbits have $\omega-$limit in $\Omega$. We say that $\Omega$ is \emph{asymptotically stable} (or  $\Omega$ is a \emph{global attractor}) if $\textbf{B}(\Omega)= \EU^3$. An attracting set is said to be \emph{quasi-stochastic} if it encloses periodic solutions with different Morse indices (dimension of the unstable manifold), structurally unstable cycles, sinks and saddle-type invariant sets (cf. \cite{Gonchenko97}).

\subsection{Heteroclinic structures}
\label{app: HSt}
Suppose that $O_1$ and $O_2$ are two hyperbolic equilibria of   \eqref{general2}  with different Morse indices (dimension of the unstable manifold). There is a {\em heteroclinic cycle} associated to $O_1$ and $O_2$ if
$$W^{u}(O_1)\cap W^{s}(O_2)\neq \emptyset \qquad \text{and} \qquad W^{u}(O_2)\cap W^{s}(O_1)\neq \emptyset.$$ For $ i\neq  j \in \{1,2\}$, the non-empty intersection of $W^{u}(O_i)$ with $W^{s}(O_j)$ is called a \emph{heteroclinic connection} between $O_i$ and $O_j$, and will be denoted by $[O_i \rightarrow  O_j]$. Although heteroclinic cycles involving equilibria are not a generic property within differential equations, they may be structurally stable within families of vector fields which are equivariant under the action of a compact Lie group $\mathcal{G}\subset \mathbb{O}(n)$, due to the existence of flow-invariant subspaces \cite{GH}.

\medbreak
A heteroclinic cycle between two hyperbolic saddle-foci of different Morse indices, where one of the connections is transverse while the other is structurally unstable, is called a \emph{Bykov cycle}. 
We address the reader to \cite{HS} for an overview of heteroclinic bifurcations and substantial information on the dynamics near different types of   structures.

\subsection{Historic behaviour}
\label{ss: historic behaviour}
We say that the solution of \eqref{general2}, $\varphi_{(A, \lambda, \omega)}(t,x)$ with $x \in \EU^3$, has \emph{historic behaviour} if 
there is a continuous function ${H}:\EU^3\rightarrow \RR $ such that
 the time average
$\dpt\frac{1}{T}\int_{0}^{T} {H} (\varphi_{(A, \lambda, \omega)}(t,x)) dt\, \, $
fails to converge.

\subsection{Strange attractor}
\label{ss: strange attractor}
A  (H\'enon-type) \emph{strange attractor} of a two-dimensional dissipative diffeomorphism $R$ defined in a Riemannian manifold  $\mathcal{M}$, is a compact invariant set $\Lambda$ with the following properties: 
\begin{itemize}
\item $\Lambda$  equals the closure of the unstable manifold of a hyperbolic periodic point;
\item the basin of attraction of $\Lambda$   contains an open set;
\item there is a dense orbit in $\Lambda$ with a positive Lyapounov exponent (exponential growth of the derivative along its orbit).
\end{itemize}
A vector field possesses a strange attractor if the first return map to a cross section does.  

\subsection{SRB measure}
\label{ss: SRB measure}
Given an attracting set ${\Omega}$ for a continuous map $R: \mathcal{M} \rightarrow \mathcal{M}$ where  $ \mathcal{M}$ is a compact smooth manifold, consider the Birkhoff average with respect to the continuous function $T:  \mathcal{M} \rightarrow \RR$ on the $R$-orbit starting at $x\in  \mathcal{M}$:
\begin{equation}
\label{limit1}
L(T, x)=\lim_{n\in \NN} \quad \frac{1}{n} \sum_{i=0}^{n-1} T \circ R^i(x).
\end{equation}

Suppose that, for Lebesgue almost all points $x\in \textbf{B}({\Omega})$,  the limit \eqref{limit1} exists and is independent on $x$. Then $L$ is a continuous linear functional in the set of continuous maps from  $\mathcal{M}$ to $\RR$ (denoted by $C(\mathcal{M}, \RR)$). By the \emph{Riesz Representation Theorem}, it defines a unique probability measure $\mu$ such that:
\begin{equation}
\label{limit2}
\lim_{n\in \NN} \quad \frac{1}{n} \sum_{i=0}^{n-1} T \circ R^i(x) = \int_{\Omega} T \, d\mu
\end{equation}
for all $T\in C(\mathcal{M}, \RR)$ and for Lebesgue almost all points $x\in \textbf{B}({\Omega})$.  If there exists an ergodic measure $\mu$ supported in ${\Omega}$ such that \eqref{limit2} is satisfied for all continuous maps $T\in C(\mathcal{M}, \RR)$ for Lebesgue almost all points $x\in \textbf{B}({\Omega})$, where $\textbf{B}({\Omega})$ has positive Lebesgue measure, then $\mu$ is called a SRB (Sinai-Ruelle-Bowen)  measure and ${\Omega}$ is a SRB attractor.  More details in \cite{WY}.

\subsection{Non-trivial wandering domains}
\label{ss: wandering}
A \emph{non-trivial wandering domain}  for a given map $R$ on a Riemannian manifold $ \mathcal{M}$ is a non-empty connected open set $D \subset  \mathcal{M}$ which satisfies the following conditions:
\medbreak
\begin{itemize} 
\item $R^i(D)\cap R^j(D)=\emptyset$ for every $i,j\geq 0$ ($i\neq j$) \\
\item the union of the $\omega$-limit sets of points in $D$ for $R$, denoted by $\Omega(D,R)$, is not equal to a single periodic orbit.
\medbreak
\end{itemize}
A wandering domain $D$ is called \emph{contracting} if the \emph{diameter} of $R^n(D)$ converges to zero as $n \rightarrow +\infty$.

\subsection{Rotational horseshoe}
\label{Rotational horseshoe}
Let $\mathcal{H} $ stand for the infinite annulus $\mathcal{H} = \EU^1 \times \RR$ (endowed with the usual inner product from $\RR^2$). We denote by $Homeo^+(\mathcal{H} )$ the set of homeomorphisms of the annulus which preserve orientation.
Given a homeomorphism $f :X \rightarrow X$  and a partition of $m\in \NN\backslash\{1\}$ elements $R_0,..., R_{m-1}$ of $X\subset \mathcal{H}$, the itinerary  function $\xi: X \rightarrow \{0, ..., m-1\}^\ZZ= \Sigma_m$ is defined by: $$\xi(x)(j)=k\quad   \Leftrightarrow \quad f^j(x)\in R_k, \quad \text{for every} \quad j\in \ZZ.$$  
Following \cite{PPS}, we say that a compact invariant set $\Lambda \subset \mathcal{H} $ of $f \in Homeo^+(\mathcal{H} )$ is a \emph{rotational horseshoe} if it admits a finite partition $P =\{R_0, ..., R_{m-1} \}$ by sets $R_i$ with non empty interior in $\Lambda$ so that:
\begin{itemize}
\item the itinerary $\xi$ defines a semi-conjugacy between $f|_\Lambda$ and the full-shift $\sigma: \Sigma_m \rightarrow \Sigma_m$, that is $\xi  \circ f = \sigma \circ \xi$ with $\xi$ continuous and onto;
\medbreak
\item for any lift $F: \RR^2 \rightarrow \RR^2$ of $f$, there exist  $k>0$ and $m$ vectors $v_0, ...,v_{m-1} \in \ZZ \times \{0\}$ so that:
$$
\left\| (F^n(\hat{x})-\hat{x})  - \sum_{i=0}^n v_{\xi(x)(i)}\right\| <k \qquad \text{for every} \qquad  \hat{x}\in \pi^{-1}(\Lambda), \quad n\in \NN,
$$
where $\|\star\|$ is the usual norm on $\RR^2$,  $\pi:\RR^2\rightarrow \mathcal{H}$ denotes the usual projection map and $\hat{x} \in \pi^{-1}(\Lambda)$ is the lift of $x$; more details in the proof of Lemma 3.1 of \cite{PPS}. The existence of a rotational horseshoe for a map implies positive topological entropy  at least  $ \log m $.
\end{itemize}

\end{document}